\definecolor{halfgray}{gray}{0.55} 
\definecolor{webgreen}{rgb}{0,0.5,0}
\definecolor{webbrown}{rgb}{.6,0,0} \hypersetup{%
\newtheorem{theorem}{Theorem}[section]
\newtheorem*{theorem*}{Theorem}
\newtheorem{lemma}[theorem]{Lemma}
\newtheorem{corollary}[theorem]{Corollary}
\newtheorem{proposition}[theorem]{Proposition}
\newtheorem{remark}[theorem]{Remark}
\newtheorem{conjecture}{Conjecture}[section]
\newtheorem{ltheorem}{Theorem}
\def\R{\mathbb{R}}
\def\real{\mathbb{R}}
\def\proj{\mathbb{P}^1}
\def\id{\operatorname{id}}
\def\cW{\mathcal{W}}
\def\quand{\quad\text{and}\quad}
\def\hA{\hat{A}}
\def\hrho{\hat{\rho}}
\def\SL{SL(2,\real)}
\def\Pone{{\mathbb{P}^{1}}}
\def\hA{\hat{A}}
\def\hm{\hat{m}}
\def\hnu{\hat{\nu}}
\def\Pone{\mathbb{P}^1}
\newcommand{\norm}[1]{{\left\lVert  #1  \right\rVert}}
\newcommand{\abs}[1]{{\left\lvert  #1  \right\rvert}}
\begin{document}

\title
[The set of cocycles with nonvanishing Lyapunov exponents is open]
{The set of fiber-bunched cocycles with nonvanishing Lyapunov exponents over a partially hyperbolic map is open}

\author{Lucas Backes}

\address{Departamento de Matem\'atica, Universidade Federal do Rio Grande do Sul, Av. Bento Gon\c{c}alves 9500, CEP 91509-900, Porto Alegre, RS, Brazil.}
\email{lhbackes@impa.br }

\author{Mauricio Poletti}

\address{LAGA -- Universit\'e Paris 13, 99 Av. Jean-Baptiste Cl\'ement, 93430 Villetaneus, France.}
\email{mpoletti@impa.br}

\author{Adriana S\'anchez}

\address{IMPA -- Estrada D. Castorina 110, Jardim Bot\^anico, 22460-320 Rio de Janeiro, Brazil.}
\email{asanchez@impa.br}

\date{\today}

\keywords{Lyapunov exponents, Partially hyperbolic systems, Linear cocyles}
\subjclass[2010]{Primary: 37H15, 37A20; Secondary: 37D25}
  
  \begin{abstract}
We prove that the set of fiber-bunched $SL(2,\mathbb{R})$-valued H\"{o}lder cocycles with nonvanishing Lyapunov exponents over a volume preserving, accessible and center-bunched partially hyperbolic diffeomorphism is open. Moreover, we present an example showing that this is no longer true if we do not assume accessibility in the base dynamics.
\end{abstract}

\maketitle

\section{Introduction}
Given an invertible measure preserving transformation $f\colon (M, \mu)\to (M,\mu)$ of a standard probability space and a measurable function $A\colon M\to GL(d, \mathbb{R})$ we define the \emph{linear cocycle} over $f$ by the dynamically defined products
\begin{equation}\label{def:cocycles}
A^n(x)=
\left\{
	\begin{array}{ll}
		A(f^{n-1}(x))\ldots A(f(x))A(x)  & \mbox{if } n>0 \\
		Id & \mbox{if } n=0 \\
		(A^{-n}(f^{n}(x)))^{-1}=A(f^{n}(x))^{-1}\ldots A(f^{-1}(x))^{-1}& \mbox{if } n<0. \\
	\end{array}
\right.
\end{equation}
The simplest examples of linear cocycles are given by derivative transformations of smooth dynamical systems: the cocycle \emph{generated} by $A(x)=Df(x)$ over $f$ is called the \emph{derivative cocycle}. Taking as an example the \emph{hyperbolic theory} of Dynamical Systems where one can understand certain dynamical properties of $f$ by studying the action of $Df$ on the tangent space, one can hope that by studying properties of linear cocycles one can also deduce some properties of $f$. Nevertheless, the notion of linear cocycle is much more general and flexible, and arises naturally in many other situations as in the spectral theory of Schr\"{o}dinger operators, for instance.

In this short note we are interested in the asymptotic behavior of $A^n(x)$. More precisely, we are interested in understanding certain regularity properties of \emph{Lyapunov exponents}. These objects measure the asymptotic rates of contractions and expansions along different directions and are one of the most fundamental notions in dynamical systems.

It is well known that, in general, Lyapunov exponents can be very sensitive as functions of the cocycle. For instance, Bochi \cite{Boc-un, Boc02} proved that in the space of $SL(2,\mathbb{R})$-valued continuous cocycles over an aperiodic map, if a cocycle is not hyperbolic, then it can be approximated by cocycles with zero Lyapunov exponents. In particular, there are cocycles with positive Lyapunov exponents that are accumulated by cocycles with zero Lyapunov exponents. Moreover, Bocker and Viana \cite{BockerV} constructed an example over a hyperbolic map showing that the same phenomenon can happen in the H\"{o}lder realm. Furthermore, when the base dynamic is far from being hyperbolic, for example, when $f$ is a rotation on the circle, Wang and You \cite{WaY15}, showed that having non-zero Lyapunov exponents is not an open property even in the $C^{\infty}$ topology.

In order to construct their example, Bocker and Viana exploited the fact that the cocycle is not \emph{fiber-bunched}. In fact, it was shown by Backes, Butler and Brown \cite{BBB} that in the fiber-bunched setting over a hyperbolic map the Lyapunov exponents vary continuously with respect to the cocycle and, in particular, cocycles with positive Lyapunov exponents can not be approximate by cocycles with zero Lyapunov exponents.

In the present work we are interested in understanding the case when the cocycle still have some regularity properties, namely, it is fiber-bunched but the base dynamics exhibit some mixed behaviour of hyperbolicity and non-hyperbolicity, that is, the map $f$ is \emph{partially hyperbolic}. In fact, we show that if $f$ is chaotic enough and $A$ is fiber-bunched then the Bochi phenomenon can not occur. More precisely, (see Section \ref{sec: statements} for detailed definitions),

\begin{theorem}\label{t: introduction}
If $(f,\mu)$ is a volume preserving partially hyperbolic accessible and center-bunched diffeomorphism  and $A:M\to SL(2,\mathbb{R})$ is a H\"{o}lder continuous fiber-bunched map with nonvanishing Lyapunov exponents, then $A$ can not be accumulated by cocycles with zero Lyapunov exponents.
\end{theorem}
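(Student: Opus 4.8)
The plan is to argue by contradiction, using the theory of invariant measures for the projective cocycle and the theory of holonomies for fiber-bunched cocycles. Suppose $A$ has nonvanishing Lyapunov exponents but is accumulated by a sequence $A_k \to A$ (in the H\"older topology) of cocycles with zero Lyapunov exponents. Since fiber-bunching is an open condition, we may assume all $A_k$ are fiber-bunched. For each $A_k$, because its Lyapunov exponents vanish, a result in the spirit of the Invariance Principle of Ledrappier, Bonatti--Viana, and Avila--Viana guarantees that there is an $A_k$-invariant probability measure $m_k$ on the projective bundle $M \times \mathbb{P}^1$, projecting to $\mu$, whose disintegration $\{m_{k,x}\}$ is invariant under both the stable and unstable holonomies of $A_k$ (here is where fiber-bunching is essential: it is what makes the holonomies well defined and H\"older, and center-bunching plus accessibility of $f$ is what will let us upgrade holonomy-invariance to rigidity). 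The first main step is therefore to set up these objects carefully and to invoke continuity of the holonomies with respect to the cocycle, so that the holonomies $H^{s/u}_{A_k}$ converge to $H^{s/u}_A$ as $k \to \infty$.

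The second step is a compactness argument: pass to a subsequence so that $m_k \to m$ weakly-$*$, where $m$ is an $A$-invariant probability on $M \times \mathbb{P}^1$ projecting to $\mu$. Using the convergence of holonomies together with the weak-$*$ convergence, one shows that the disintegration of $m$ is still invariant under the stable and unstable holonomies of $A$ — this is a semicontinuity/closedness statement for the set of $su$-states, and it is the technical heart of the first half of the argument. The third step invokes accessibility and center-bunching: a measure on $M \times \mathbb{P}^1$ projecting to volume, invariant under both families of holonomies over an accessible center-bunched partially hyperbolic $f$, must have a disintegration that is \emph{continuous} (indeed locally constant along accessibility classes, hence constant, by an argument of Avila--Santamaria--Viana type). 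Thus $m$ has a continuous, $f$-equivariant family of conditional measures $x \mapsto m_x$ on $\mathbb{P}^1$ which is $A$-invariant: $A(x)_* m_x = m_{f(x)}$.

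The fourth and final step derives the contradiction. The cocycle $A$ has simple Lyapunov spectrum $\lambda(A) > 0 > -\lambda(A)$, so by Oseledets there are measurable line fields $E^+(x), E^-(x)$ with $A(x)E^\pm(x) = E^\pm(f(x))$. An $A$-invariant section $x\mapsto m_x$ of probability measures on $\mathbb{P}^1$ must, by invariance and the contraction toward $E^+$ under forward iteration (and toward $E^-$ under backward iteration), be supported on $\{E^+(x), E^-(x)\}$ for $\mu$-a.e.\ $x$; in fact $m_x = t\,\delta_{E^+(x)} + (1-t)\,\delta_{E^-(x)}$ with $t$ constant by ergodicity. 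But $E^\pm$ are genuinely measurable and \emph{not} continuous (a continuous invariant section would force the cocycle to be uniformly hyperbolic, and even then it would have to be compatible with the holonomies, which one shows is impossible for a cocycle with the fiber-bunching rate — the holonomies would have to preserve $E^\pm$, forcing, via accessibility, a global continuous splitting and ultimately a contradiction with $\lambda(A)>0$ through a Furstenberg-type or ping-pong estimate). Concretely, the cleanest route is: continuity of $m_x$ plus $A$-invariance plus positivity of the exponent forces $\operatorname{supp}(m_x)$ to be a continuous invariant set of at most two points, hence $E^+ \cup E^-$ is continuous and holonomy-invariant; accessibility then makes it locally constant, so globally a finite set of constant lines in $\mathbb{P}^1$ preserved by all $A(x)$, contradicting that $A$, being a limit situation but having a positive exponent, acts without a finite invariant set on $\mathbb{P}^1$ in a set of positive measure.

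\medskip

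The step I expect to be the main obstacle is the \emph{passage to the limit for $su$-states} (step two combined with step three): showing that weak-$*$ limits of holonomy-invariant measures for $A_k$ remain holonomy-invariant for $A$, and then that accessibility forces continuity of the disintegration, must be done with enough uniformity in $k$ — one needs the fiber-bunching constants, the H\"older exponents, and the holonomy bounds to be uniform along the sequence $A_k \to A$, which is where the openness of fiber-bunching and careful quantitative control of the holonomy construction (as in the work on cohomology of fiber-bunched cocycles) really has to be used. Establishing that uniformity, and then combining it with the $C^0$-rigidity of $su$-invariant measures over accessible center-bunched systems, is the crux of the whole proof.
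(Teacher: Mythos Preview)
Your first three steps align closely with the paper: one argues by contradiction, takes ergodic $F_{A_k}$-invariant measures $m_k$ on $M\times\mathbb{P}^1$, shows their disintegrations are $su$-invariant (via the Invariance Principle, since $\lambda^u(A_k)=0$), passes to a weak-$*$ limit $m$, and uses uniform control on holonomies together with the Avila--Santamaria--Viana machinery to get that the limiting disintegration $\{m_x\}$ is continuous and $su$-invariant. The paper also establishes (as you anticipate) that $m_x=a\,\delta_{E^{u,A}_x}+b\,\delta_{E^{s,A}_x}$; in fact $a=b=\tfrac12$ because $\int\Phi_A\,dm=\lim\int\Phi_{A_k}\,dm_k=0$. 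From this the paper deduces that $x\mapsto E^{u,A}_x$ and $x\mapsto E^{s,A}_x$ extend to \emph{continuous}, $su$-holonomy-invariant line fields on all of $M$.

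The gap is in your Step~4. You assert that the Oseledets directions are ``genuinely measurable and not continuous'', or alternatively that holonomy-invariance plus accessibility forces the set $\{E^+_x,E^-_x\}$ to be \emph{constant} in $x$, giving a finite set of fixed lines in $\mathbb{P}^1$ preserved by every $A(x)$. Both claims are false in this setting. The paper in fact \emph{proves} continuity of $E^{\pm}$, and $su$-invariance means $H^{*,A}_{xy}E^{\pm}_x=E^{\pm}_y$, not $E^{\pm}_x=E^{\pm}_y$: the holonomies are nontrivial linear maps, so accessibility does not collapse the section to a constant. Having a continuous, holonomy-invariant two-point section is entirely compatible with $\lambda^u(A)>0$ and is not by itself a contradiction. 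So your endgame, as written, does not close.

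The paper's actual contradiction is more delicate and, crucially, goes back to the \emph{approximating} measures $m^k_x$, not just the limit $m_x$. First, an ergodicity argument (their Section~4.2) shows that one cannot have $m^k_x$ atomic with a \emph{uniformly bounded} number of atoms along a subsequence: if $m^k_x=\tfrac1j\sum\delta_{v^i_k(x)}$ with $j$ fixed, then since $m^k_x\to\tfrac12(\delta_{E^u_x}+\delta_{E^s_x})$ uniformly, half the atoms cluster near $E^u$ and half near $E^s$; ergodicity of $m_k$ forces some $A_k(x_k)$ to send an atom from the $E^u$-cluster to the $E^s$-cluster, and passing to the limit gives $A(x)E^u_x=E^s_{f(x)}$, a contradiction. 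Second, since $H^A_\gamma$ fixes both $E^u_x$ and $E^s_x$ for every $su$-loop $\gamma$, each $H^A_\gamma$ is either hyperbolic or $\pm\mathrm{id}$. If some $H^A_\gamma$ is hyperbolic then so is $H^{A_k}_\gamma$ for large $k$, and invariance of $m^k_x$ under a hyperbolic map forces $m^k_x$ to have at most two atoms --- ruled out by the first step. Hence all loop holonomies of $A$ are $\pm\mathrm{id}$, i.e.\ trivial in $PSL(2,\mathbb{R})$. A further dichotomy (their Proposition~3.4, applied to $H^{A_k}_\gamma\to\mathrm{id}$ with invariant measures $m^k_x\to\tfrac12(\delta_p+\delta_q)$) shows that each $H^{A_k}_\gamma$ is either hyperbolic (again impossible) or trivial; a compactness argument over $(K,L)$-loops makes this uniform in $\gamma$. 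With all loop holonomies trivial for both $A$ and $A_k$, a global conjugacy reduces each cocycle to a \emph{constant} matrix in $PSL(2,\mathbb{R})$, and then $\lambda^u(A_k)\to\lambda^u(A)$ by mere continuity of eigenvalues --- the final contradiction.

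So the step you flagged as the main obstacle (uniform passage to the limit for $su$-states) is in fact handled rather cleanly by equicontinuity coming from \cite{ASV13}; the genuine difficulty is the endgame, and for that you need both the ``no bounded number of atoms'' lemma and the hyperbolic/trivial dichotomy for loop holonomies, neither of which appears in your outline.
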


Moreover, we show that the accessibilty assumption in the previous result is necessary. More precisely, 

\begin{theorem}
There exists a volume preserving partially hyperbolic and center-bunched diffeomorphism $f$ and a H\"{o}lder continuous fiber-bunched map $A$ with non-zero Lyapunov exponents which is approximated by cocycles with zero Lyapunov exponents.
\end{theorem}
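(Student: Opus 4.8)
\emph{The plan is to build the example as a product of a hyperbolic toral automorphism with an irrational rotation.} Fix a linear Anosov automorphism $g\colon\mathbb{T}^2\to\mathbb{T}^2$ with eigenvalue $\lambda>1$ (with $\lambda$ to be chosen large below), let $R_\omega\colon\mathbb{T}^1\to\mathbb{T}^1$ be an irrational rotation, and set $f=g\times R_\omega\colon\mathbb{T}^3\to\mathbb{T}^3$. Then $f$ preserves Lebesgue measure $\mathrm{Leb}$; it is partially hyperbolic, with $E^s$ and $E^u$ the constant stable and unstable bundles of $g$ and $E^c$ the circle direction; and it is center-bunched, since $Df|_{E^c}=\mathrm{Id}$ makes the center-bunching inequalities reduce to partial hyperbolicity. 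The key feature is that $f$ is \emph{not} accessible: because $R_\omega$ is an isometry, $W^s_{loc}$ and $W^u_{loc}$ of $f$ are contained in the slices $\mathbb{T}^2\times\{\theta\}$, so every accessibility class is contained in one such slice and the partition into accessibility classes is nontrivial.

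For the cocycle I would take $A(x,\theta)=B(\theta)$, depending only on the center coordinate, where $B\colon\mathbb{T}^1\to SL(2,\mathbb{R})$ is a $C^\infty$ map such that the quasi-periodic cocycle over $R_\omega$ defined by $B$ has positive Lyapunov exponent but is accumulated, in the $C^\infty$ topology, by cocycles over $R_\omega$ with zero Lyapunov exponent; for a suitable choice of $\omega$ such a $B$ exists by the phenomenon of Wang and You \cite{WaY15}. Then $A$ is H\"older continuous (indeed $C^\infty$), and since $A$ is constant along the stable and unstable leaves of $f$, its stable and unstable holonomies are trivial and the fiber-bunching requirement reduces to an inequality of the form $\sup_\theta\|B(\theta)\|\,\|B(\theta)^{-1}\|<\lambda^{\nu}$ for some H\"older exponent $\nu\in(0,1]$. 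As the left-hand side is a finite constant, it suffices to pick $g$ with $\lambda$ large enough (e.g.\ $\nu=1/2$ and $\lambda$ larger than the square of that constant); the same choice of $\lambda$ keeps every $C^0$-small perturbation of $A$ fiber-bunched as well.

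It only remains to compute Lyapunov exponents. Since $A^n(x,\theta)=B(R_\omega^{n-1}\theta)\cdots B(\theta)$ does not depend on $x$, the Furstenberg--Kesten theorem applied on $(\mathbb{T}^1,\mathrm{Leb})$ shows that, for $\mathrm{Leb}$-a.e.\ $(x,\theta)\in\mathbb{T}^3$, $\tfrac1n\log\|A^n(x,\theta)\|$ converges to the (positive) Lyapunov exponent of the quasi-periodic cocycle defined by $B$; hence $A$ has nonvanishing Lyapunov exponents with respect to $\mathrm{Leb}$. Choosing $B_k\to B$ in $C^\infty$ with vanishing Lyapunov exponent and setting $A_k(x,\theta)=B_k(\theta)$, the same computation shows that each $A_k$ is fiber-bunched and H\"older, $A_k\to A$ in the H\"older topology, and all Lyapunov exponents of $A_k$ vanish; this produces the desired example. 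The only delicate point --- and the one I would check carefully --- is the circle-level input itself: that Wang--You's perturbations can be taken to drop the exponent all the way to $0$ (which for $SL(2,\mathbb{R})$ is automatic once uniform hyperbolicity is lost) and that their $C^\infty$-approximation refines the H\"older topology used in the paper. Everything else is bookkeeping, the hyperbolic factor $g$ serving only to supply partial hyperbolicity, center-bunching and (essentially trivial) fiber-bunching while destroying accessibility.
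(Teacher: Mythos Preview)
Your construction is correct and essentially identical to the paper's: both take the product of a volume-preserving Anosov diffeomorphism with an irrational circle rotation, lift a Wang--You cocycle $B$ from the circle factor to the product (depending only on the center coordinate), and arrange fiber-bunching by choosing the Anosov factor sufficiently hyperbolic. One small remark: your parenthetical that zero exponent is ``automatic once uniform hyperbolicity is lost'' is false in general, but this does not matter since Wang--You's theorem directly produces approximating cocycles with \emph{zero} Lyapunov exponent, which is exactly what the paper invokes.
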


\section{Statements} \label{sec: statements}
Let $f:M\to M$ be a $C^{r}$, $r\geq 2$, diffeomorphism defined on a compact manifold $M$, $\mu$ an ergodic $f$-invariant Borel probability measure and let $A:M\to SL(2,\R)$ be an $\alpha$-H\"{o}lder continuous map. This means that there exists a constant $C>0$ such that
\begin{displaymath}
\norm{A(x)-A(y)} \leq C d(x,y)^{\alpha}
\end{displaymath}
for all $x,y\in M$ where $\norm{A}$ denotes the operator norm of a matrix $A$, that is, $\norm{A} =\sup \lbrace \norm{Av}/\norm{v};\; \norm{v}\neq 0 \rbrace$. Let $H^{\alpha}(M)$ denote the space of all such $\alpha$-H\"{o}lder continuous maps. We endow this space with the $\alpha$-H\"{o}lder topology which is generated by the norm 
$$\norm{A}_{\alpha}=\sup _{x\in M}\norm{A(x)}+\sup_{x\neq y}\frac{\norm{A(x)-A(y)}}{d(x,y)^{\alpha}}.$$

\subsection{Lyapunov exponents}

It follows from the subadditive ergodic theorem of Kingman \cite{Ki68} that there exists a full $\mu$-measure set $\mathcal{R}^{\mu}\subset M$, whose points are called \emph{$\mu$-regular points}, such that for every $x\in \mathcal{R}^{\mu}$ the limits
\begin{displaymath}
\lambda^u(A,x)=\lim _{n\to  \infty} \frac{1}{n} \log \norm{A^n(x)} \text{ and }
\lambda^s(A,x)=\lim _{n\to  \infty} \frac{1}{n} \log \norm{(A^n(x))^{-1}}^{-1}
\end{displaymath}
exist. We call such limits \emph{Lyapunov exponents}. Moreover, when $\lambda^u(A,x)\neq \lambda^s(A,x)$ it follows from a famous theorem of Oseledets \cite{Ose68}  that there exists a decomposition $\mathbb{R}^2=E^{u,A}_{x}\oplus E^{s,A}_{x}$, called the \emph{Oseledets decomposition}, into vector subspaces depending measurably on $x$ such that for every $x\in \mathcal{R}^{\mu}$,
\begin{equation}\label{eq: Lyap and Osel}
A(x)E^{*,A}_{x}=E^{*,A}_{f(x)} \text{ and }  \lambda ^{*}(A,x) =\lim _{n\to \pm  \infty} \dfrac{1}{n}\log \| A^n(x)v\| 
\end{equation}
for every non-zero $v\in E^{*,A}_{x}$ and $*\in \{u,s\}$. Furthermore, since the Lyapunov exponents are $f$-invariant, ergodicity of $\mu$ implies that they are constant for every $x\in \mathcal{R}^{\mu}$. In this case we write $\lambda^u(A,x)=\lambda^u(A,\mu)$ and $\lambda^s(A,x)=\lambda^s(A,\mu)$.

\subsection{Partial Hyperbolicity}\label{subsec: PH}
A diffeomorphism $f:M \to M$ of a compact $C^r$ manifold $M$, $r\geq 1$, is said to be \emph{partially hyperbolic} if there exists a non-trivial splitting of the tangent bundle
\begin{equation*}
TM=E^s\oplus E^c\oplus E^u
\end{equation*}
invariant under the derivative $Df$, a Riemannian metric $\|\cdot\|$ on $M$, and positive
continuous functions $\nu$, $\hat{\nu}$, $\gamma$, $\hat{\gamma}$ with $\nu$, $\hat{\nu}<1$
and $\nu<\gamma<{\hat\gamma}^{-1}<{\hat\nu}^{-1}$ such that, for any unit vector $v\in T_xM$,
\begin{alignat*}{2}
& \|Df(x)v \| < \nu(x) & \quad & \text{if } v\in E^s(x),
 \\
\gamma(x) < & \|Df(x)v \|  < {\hat{\gamma}(x)}^{-1} & & \text{if } v\in E^c(x),
 \\
{\hat{\nu}(x)}^{-1} < & \|Df(x)v\| & &  \text{if } v\in E^u(x).
\end{alignat*}
All three sub-bundles $E^s$, $E^c$, $E^u$ are assumed to have positive dimension. We say that $f$ is \emph{center-bunched} if 
\begin{displaymath}
\nu < \gamma \hat{\gamma} \text{ and } \hat{\nu} < \gamma \hat{\gamma}.
\end{displaymath}
We need this hypothesis because we are going to use the results of \cite{ASV13}. From now on, we take $M$ to be endowed with the distance $d:M\times M\to \real$ associated to such a Riemannian structure.

Suppose that $f:M\to M$ is a partially hyperbolic diffeomorphism, then the stable and unstable bundles $E^s$ and $E^u$ are uniquely integrable and their integral manifolds form two transverse continuous foliations $\cW^s$ and $\cW^u$, whose leaves are immersed sub-manifolds of the same class of differentiability as $f$. These foliations are referred to as the \emph{strong-stable} and \emph{strong-unstable} foliations. They are invariant under $f$, in the sense that
$$
f(\cW^s(x))= \cW^s(f(x)) \quand f(\cW^u(x))= \cW^u(f(x)),
$$
where $\cW^s(x)$ and $\cW^u(x)$ denote the leaves of $\cW^s$ and $\cW^u$, respectively,
passing through any $x\in M$. We say that $f$ is \emph{accessible} if $M$ and $\emptyset$ are the only $su$-saturated sets. This means that, except of $\emptyset$, $M$ is the only set that is a union of entire strong-stable and strong-unstable leaves.

\subsection{Fiber-bunched cocycles} Let $f:M\to M$ be a $C^r$ partially hyperbolic map on a compact manifold $M$ and $A:M\to SL(2,\R)$ be an $\alpha$-H\"{o}lder continuous map. We say that the cocycle generated by $A$ over $f$ is \emph{fiber-bunched} if 
\begin{equation*}
\|A(x)\|\|A(x)^{-1}\|\nu (x)^\alpha <1 \text{ and } \|A(x)\|\|A(x)^{-1}\|\hnu (x)^\alpha <1
\end{equation*}
for every $x\in M$. As a shorthand for this notion, since our base dynamics $f$ is going to be fixed, we simply say that $A$ is fiber-bunched. Observe that this is an open condition in $H^\alpha(M)$.

\subsection{Main results} The main results of this note are the following. Recall that a measure $\mu$ is in the \emph{Lebesgue class} if it is generated by a volume form.

\begin{ltheorem}\label{teoA}
Let $f:M\to M$ be a $C^{r}$, $r\geq 2$, partially hyperbolic, volume preserving,
center-bunched and accessible diffeomorphism defined on a compact manifold $M$ and $\mu$ an ergodic $f$-invariant measure in the Lebesgue class. If $A\in H^{\alpha}(M)$ is fiber-bunched and $\lambda^u(A,\mu)>\lambda^s(A,\mu)$ then $A$ can not be accumulated by cocycles with zero Lyapunov exponents.  
\end{ltheorem}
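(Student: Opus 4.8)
The plan is to argue by contradiction using the continuity of Lyapunov exponents together with the theory of invariant holonomies and invariant measures for fiber-bunched cocycles over accessible partially hyperbolic systems, developed in \cite{ASV13}. Suppose $A\in H^{\alpha}(M)$ is fiber-bunched with $\lambda^{u}(A,\mu)>\lambda^{s}(A,\mu)$, and suppose for contradiction that there is a sequence $A_{k}\to A$ in $H^{\alpha}(M)$ with $\lambda^{u}(A_{k},\mu)=\lambda^{s}(A_{k},\mu)=0$. Since fiber-bunching is an open condition, we may assume every $A_{k}$ is fiber-bunched. The first step is to recall that a fiber-bunched cocycle admits stable and unstable holonomies $H^{s}$, $H^{u}$ along the leaves of $\cW^{s}$ and $\cW^{u}$, which depend continuously (in fact H\"older-continuously) on the cocycle; hence $H^{s}_{k}\to H^{s}$ and $H^{u}_{k}\to H^{u}$. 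One then lifts to the projective cocycle $F_{A}\colon M\times\Pone\to M\times\Pone$ and, for each $k$, picks an $F_{A_{k}}$-invariant probability measure $m_{k}$ projecting to $\mu$; by compactness of the space of such measures (in the weak-$*$ topology) we may pass to a subsequence with $m_{k}\to m$, and $m$ is $F_{A}$-invariant and projects to $\mu$.

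The second step is to exploit that $\lambda^{u}(A_{k},\mu)=\lambda^{s}(A_{k},\mu)$. By the Furstenberg-type formula, the sum $\lambda^{u}+\lambda^{s}=0$ forces, for the extremal measures, the integral of $\log\|A_{k}(x)v\|/\|v\|$ against $m_{k}$ to vanish, and more importantly the vanishing of the exponents means no measure can be "hyperbolic": every $F_{A_{k}}$-invariant measure projecting to $\mu$ has both fiber exponents equal to zero. The key structural input from \cite{ASV13} is that, under accessibility and center-bunching, any such invariant measure $m_{k}$ is $su$-invariant, i.e.\ invariant under both the stable and unstable holonomies of $A_{k}$; this is where accessibility is essential. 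Passing to the limit, using $m_{k}\to m$ and $H^{s/u}_{k}\to H^{s/u}$, the limit measure $m$ is invariant under the stable and unstable holonomies of $A$ and is $F_{A}$-invariant projecting to $\mu$.

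The third step is to derive a contradiction from the existence of such an $su$-invariant $F_{A}$-invariant measure $m$ when $\lambda^{u}(A,\mu)>\lambda^{s}(A,\mu)$. Since the exponents of $A$ are distinct, the Oseledets decomposition $\R^{2}=E^{u,A}\oplus E^{s,A}$ exists, and the only $F_{A}$-invariant measures projecting to $\mu$ are convex combinations of the measures $m^{u}$, $m^{s}$ supported on the graphs of $E^{u,A}$, $E^{s,A}$ (this uses that along $m$-generic fibers the projective dynamics contracts toward $E^{u,A}$, by the invariance principle / the theory of $u$-states). But an $su$-invariant measure is in particular continuous along stable and unstable leaves, while the Oseledets subbundles of a cocycle with a genuine gap are, by the invariance principle of Avila--Viana in the accessible setting, either everywhere equal (impossible, as the bundles are transverse) or the disintegration cannot be holonomy-invariant unless the exponents coincide — contradiction. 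Concretely, $su$-invariance of $m^{u}$ (say) together with accessibility propagates the value of $E^{u,A}_{x}$ along all $su$-paths, forcing $E^{u,A}$ to be continuous and $su$-invariant, and then the same for $E^{s,A}$; two transverse continuous $su$-invariant line fields on an accessible system would make the cocycle conjugate to a diagonal one, which is incompatible with fiber-bunching forcing the holonomies to be nontrivial in the way dictated by $\lambda^{u}>\lambda^{s}$ — the bunching inequality $\|A\|\|A^{-1}\|\nu^{\alpha}<1$ bounds the exponents by the contraction rates, and one checks a diagonal fiber-bunched cocycle over an accessible base must have $\lambda^{u}=\lambda^{s}$.

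The main obstacle is the third step: making rigorous that an $su$-invariant $F_{A}$-invariant measure cannot coexist with a nontrivial Lyapunov gap. The cleanest route is to invoke directly the results of \cite{ASV13} characterizing when the exponents vanish — namely that for a fiber-bunched cocycle over a volume-preserving, accessible, center-bunched $f$, vanishing of the exponents is equivalent to the existence of an $su$-invariant section of the projective bundle (a continuous invariant conformal structure), and conversely that $\lambda^{u}>\lambda^{s}$ rules this out. Then the contradiction is immediate: $m$ provides such a section for $A$, so $\lambda^{u}(A,\mu)=\lambda^{s}(A,\mu)$, contrary to hypothesis. I expect the write-up to consist mostly of (i) verifying the continuity of holonomies in the $H^{\alpha}$ topology, (ii) the compactness/limit argument for invariant measures, and (iii) quoting the precise dichotomy from \cite{ASV13}; the one genuinely delicate point is ensuring the limit measure $m$ inherits holonomy-invariance, which requires the uniform (over $k$, near $A$) fiber-bunching constants so that the holonomy series converge uniformly.
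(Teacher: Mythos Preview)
Your first two steps match the paper. The genuine gap is the third step: the existence of an $su$-invariant $F_A$-invariant measure $m$ projecting to $\mu$ does \emph{not} contradict $\lambda^u(A,\mu)>\lambda^s(A,\mu)$, and no such dichotomy appears in \cite{ASV13} --- the invariance principle goes only one way, and an $su$-invariant projective measure is not a conformal structure. In fact the paper computes the limit disintegration as $m_x=\tfrac12\delta_{E^{u,A}_x}+\tfrac12\delta_{E^{s,A}_x}$ and shows that the Oseledets line fields $E^{u,A},E^{s,A}$ themselves extend to continuous $su$-invariant line fields on all of $M$; no contradiction arises from this alone (a constant hyperbolic matrix over $f$ already has two transverse holonomy-invariant line fields, a nonzero gap, and is fiber-bunched whenever its norm is small enough). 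Your claim that a diagonal fiber-bunched cocycle over an accessible base must have $\lambda^u=\lambda^s$ is simply false.

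The paper's contradiction comes from the \emph{approximating} disintegrations $\{m^k_x\}$, not from $m$. One first rules out, using ergodicity of $m_k$ and the uniform convergence $m^k_x\to\tfrac12(\delta_{E^{u,A}_x}+\delta_{E^{s,A}_x})$, that $m^k_x$ can be atomic with a uniformly bounded number of atoms. Since every $su$-loop holonomy $H^A_\gamma$ fixes both $E^{u,A}_x$ and $E^{s,A}_x$, it is hyperbolic or $\pm\id$; hyperbolic is excluded because then $H^{A_k}_\gamma$ would be hyperbolic for large $k$ and $m^k_x$ would have at most two atoms. Hence $H^A_\gamma=\pm\id$ for every loop. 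A further lemma (Proposition~\ref{l.hyperbolic.matrix}) about $\SL$ matrices near $\id$ whose invariant measures converge to a two-atom measure then forces the projectivized loop holonomies of $A_k$ to be either hyperbolic (again excluded) or trivial; in the trivial case one conjugates each $\tilde A_k$ to a constant $PSL(2,\real)$ cocycle via the holonomies, and continuity of eigenvalues gives $\lambda^u(A_k,\mu)\to\lambda^u(A,\mu)>0$, the final contradiction.
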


We observe that a similar result can be stated in terms of $GL(2,\mathbb{R})$-valued cocycles changing `cocycles with zero Lyapunov exponents' by `cocycles with just one Lyapunov exponent'. Indeed, by continuity of $A$ and connectedness of $M$ (which follows from the accessibility), either $\det(A(x))>0$ for every $x\in M$ or $\det(A(x))<0$ for every $x\in M$. Suppose we are in the first case (the other case can be easily deduced from this one). Then, given $A:M \rightarrow GL(2,\mathbb{R})$ consider $g_{A}:M\rightarrow \mathbb{R}$ defined by $g_{A}(x)=(\det A(x))^{\frac{1}{2}}$ and $B:M \rightarrow SL(2,\mathbb{R})$ such that $A(x)=g_{A}(x)B(x)$. Therefore,
\begin{displaymath}
\lambda ^{u/s}(A,\mu)=\lambda ^{u/s}(B,\mu) + \int \log (g_{A}(x))\  d\mu(x),
\end{displaymath}
and consequently,
\begin{displaymath}
\lambda ^{u}(A,\mu)=\lambda ^{s}(A,\mu) \Longleftrightarrow \lambda ^{u}(B,\mu)=0=\lambda ^{s}(B,\mu).
\end{displaymath}

As already mentioned at the introduction, we also present an example showing that the accessibilty assumption in the previous theorem is necessary. More precisely,

\begin{ltheorem}\label{teoB} 
There exists a volume preserving partially hyperbolic and center-bunched diffeomorphism $f$ and a H\"{o}lder continuous fiber-bunched map $A$ with non-zero Lyapunov exponents which is approximated by cocycles with zero Lyapunov exponents.
\end{ltheorem}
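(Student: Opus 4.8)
The plan is to take $f$ to be a product $f=h\times g$ of two Anosov automorphisms of $\mathbb{T}^{2}$, arranged so that $E^{s}$ and $E^{u}$ come from the $h$-factor while the center bundle $E^{c}=\{0\}\times T\mathbb{T}^{2}$ is the whole $g$-factor. For $g$ and the cocycle I use the Hölder example of Bocker and Viana \cite{BockerV}: a $C^{\infty}$, volume-preserving linear Anosov $g$ of $\mathbb{T}^{2}$ together with an $\alpha$-Hölder $B\colon\mathbb{T}^{2}\to SL(2,\R)$ such that $\lambda^{u}(B,\mathrm{Leb})>\lambda^{s}(B,\mathrm{Leb})$ but $B$ is accumulated in $H^{\alpha}(\mathbb{T}^{2})$ by cocycles with $\lambda^{u}=\lambda^{s}$. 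By \cite{BBB} such a $B$ is \emph{not} fiber-bunched over $g$; the whole trick is that, after pulling $B$ back to $M=\mathbb{T}^{2}\times\mathbb{T}^{2}$, it becomes fiber-bunched over $f$, because fiber-bunching over $f$ only sees the strong rates of $f$, and those we make as small as we please by taking $h$ very hyperbolic. Concretely, letting $\mu\in(0,1)$ be the stable eigenvalue of $h$, I choose $h$ (e.g.\ a high power of a hyperbolic toral automorphism of $\mathbb{T}^{2}$) so that $\sqrt{\mu}$ is smaller than $\min_{z}s_{\min}(Dg(z))$, than $\min_{z}s_{\min}(Dg(z))/s_{\max}(Dg(z))$, than $1/\max_{z}s_{\max}(Dg(z))$, and than $(\sup_{z}\norm{B(z)}\,\norm{B(z)^{-1}})^{-1/\alpha}$, where $s_{\min},s_{\max}$ denote the smallest and largest singular values.

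Next I check that $f=h\times g$ on $M$, with the product metric and product volume $\mathrm{vol}$, satisfies every hypothesis of Theorem~\ref{teoA} except accessibility. It is $C^{\infty}$ and preserves $\mathrm{vol}$; the volume is $f$-ergodic because $h$ and $g$ are mixing and a product of mixing maps is mixing. Taking $E^{s}=E^{s}_{h}\times\{0\}$, $E^{u}=E^{u}_{h}\times\{0\}$, $E^{c}=\{0\}\times T\mathbb{T}^{2}$, with $\nu\equiv\hat\nu\equiv\sqrt{\mu}$ and with $\gamma,\hat\gamma^{-1}$ continuous functions slightly below $s_{\min}(Dg(\cdot))$ and slightly above $s_{\max}(Dg(\cdot))$, the choices above yield $\nu<\gamma<\hat\gamma^{-1}<\hat\nu^{-1}$ and the center-bunching inequalities $\nu,\hat\nu<\gamma\hat\gamma$; hence $f$ is partially hyperbolic and center-bunched. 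Finally $\cW^{s}_{f}(x,y)=\cW^{s}_{h}(x)\times\{y\}$, $\cW^{u}_{f}(x,y)=\cW^{u}_{h}(x)\times\{y\}$, and since $h$ is accessible the accessibility class of $(x,y)$ is exactly $\mathbb{T}^{2}\times\{y\}$; thus the sets $\mathbb{T}^{2}\times E$, $E\subset\mathbb{T}^{2}$, are $su$-saturated and $f$ is not accessible.

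It remains to set $A=B\circ\pi$, with $\pi\colon M\to\mathbb{T}^{2}$ the projection onto the $g$-factor. Since $\pi$ is $1$-Lipschitz, $A\in H^{\alpha}(M)$, and $\norm{A(x,y)}\norm{A(x,y)^{-1}}=\norm{B(y)}\norm{B(y)^{-1}}$, so the last of the inequalities above makes $A$ fiber-bunched over $f$. As the $f$-orbit of $(x,y)$ projects to the $g$-orbit of $y$, we have $A^{n}(x,y)=B^{n}(y)$ for all $n$, whence, using $f$-ergodicity of $\mathrm{vol}$, $\lambda^{u/s}(A,\mathrm{vol})=\lambda^{u/s}(B,\mathrm{Leb})$; in particular $\lambda^{u}(A,\mathrm{vol})>\lambda^{s}(A,\mathrm{vol})$. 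And if $B_{j}\to B$ in $H^{\alpha}(\mathbb{T}^{2})$ with $\lambda^{u}(B_{j},\mathrm{Leb})=\lambda^{s}(B_{j},\mathrm{Leb})$, then $A_{j}:=B_{j}\circ\pi\to A$ in $H^{\alpha}(M)$ (again because $\pi$ is $1$-Lipschitz the Hölder seminorm does not grow) and $\lambda^{u}(A_{j},\mathrm{vol})=\lambda^{u}(B_{j},\mathrm{Leb})=\lambda^{s}(B_{j},\mathrm{Leb})=\lambda^{s}(A_{j},\mathrm{vol})$, i.e.\ $A_{j}$ has zero Lyapunov exponents. This would complete the proof.

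The only genuinely delicate point is the conceptual one already flagged: reconciling ``$A$ fiber-bunched over $f$'' with the fact, forced by \cite{BBB}, that the underlying cocycle $B$ is \emph{not} fiber-bunched over $g$. This is not a contradiction but the heart of the matter --- fiber-bunching over $f$ depends only on the strong rates $\nu,\hat\nu$, which we are free to shrink independently of $\alpha$ while keeping $f$ center-bunched, and non-accessibility is precisely what allows the pathological low-dimensional cocycle dynamics over $g$ to persist inside the higher-dimensional fiber-bunched cocycle over $f$. A lesser point is to confirm that \cite{BockerV} supplies the example over a smooth volume-preserving Anosov diffeomorphism and in the $\alpha$-Hölder topology; if preferred, one may instead invoke any Hölder instance of this Bochi-type phenomenon over a linear Anosov automorphism of $\mathbb{T}^{2}$ and run the identical argument.
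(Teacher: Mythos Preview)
Your construction follows the same mechanism as the paper's: take a cocycle over a low-dimensional system that exhibits the Bochi phenomenon, then multiply the base by a strongly hyperbolic Anosov factor so that the pulled-back cocycle becomes fiber-bunched with respect to the new strong rates, while non-accessibility lets the pathology in the center survive. The difference is only the input: you invoke Bocker--Viana over a linear Anosov $g$ on $\mathbb{T}^2$, whereas the paper uses Wang--You \cite{WaY15} over an irrational circle rotation (so the center is isometric and center-bunching is automatic).

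The genuine gap is in that input, and it is more than the ``lesser point'' you flag. The example in \cite{BockerV} is a \emph{locally constant} cocycle over a Bernoulli shift $\{1,2\}^{\mathbb{Z}}$ with a specific product measure $(p,1-p)^{\mathbb{Z}}$; it is not presented as an $\alpha$-H\"older map on $\mathbb{T}^2$ with Lebesgue measure. Passing to a linear Anosov on $\mathbb{T}^2$ via a Markov partition yields a cocycle with jump discontinuities along the partition boundaries, hence not even continuous on $\mathbb{T}^2$; and the Bernoulli weights $(p,1-p)$ need not coincide with the Lebesgue masses of the Markov rectangles, so the relevant invariant measure need not be volume. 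Smoothing $B$ and then re-establishing both $\lambda^u(B,\mathrm{Leb})>0$ and the existence of $H^{\alpha}$-approximants with zero exponents is real work that is not carried out in \cite{BockerV}. The paper's choice of Wang--You avoids all of this: those cocycles are genuinely $C^r$ on $\mathcal{S}^1$, the rotation is smooth and preserves Lebesgue, and fiber-bunching over the product holds once the Anosov factor is chosen hyperbolic enough. If you swap your input for Wang--You's and put the rotation in the center, your argument becomes exactly the paper's.
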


In light of the previous results, we are lead to make the following conjecture which is in the same spirit as the conjectures proposed by Viana \cite{LLE} in the hyperbolic setting.

\begin{conjecture}\label{conjecture}
Under the assumptions of Theorem \ref{teoA} the Lyapunov exponents of H\"{o}lder continuous $\SL$-valued cocycles vary continuously in the set of fiber-bunched cocycles.
\end{conjecture}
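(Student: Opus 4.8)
\medskip
\noindent\textbf{A possible approach to Conjecture~\ref{conjecture}.}
Since $\lambda^{u}(A,\mu)=\inf_{n\ge 1}\tfrac1n\int\log\norm{A^{n}}\,d\mu$ is an infimum of maps that are continuous on $H^{\alpha}(M)$, the function $A\mapsto\lambda^{u}(A,\mu)$ is upper semicontinuous and $A\mapsto\lambda^{s}(A,\mu)$ is lower semicontinuous; so it suffices to prove lower semicontinuity of $\lambda^{u}$ at an arbitrary fiber-bunched $A$. If $\lambda^{u}(A,\mu)=\lambda^{s}(A,\mu)=0$ this is immediate because $\lambda^{u}(B,\mu)\ge 0$ for every $\SL$-valued cocycle $B$. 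So assume $L:=\lambda^{u}(A,\mu)>0$, fix $A_{k}\to A$ in $H^{\alpha}(M)$, and put $\ell:=\liminf_{k}\lambda^{u}(A_{k},\mu)\in[0,L]$; we must show $\ell=L$. After passing to a subsequence we may assume $\lambda^{u}(A_{k},\mu)\to\ell$, and --- since by Theorem~\ref{teoA} only finitely many $A_{k}$ can have vanishing Lyapunov exponents --- that $\lambda^{u}(A_{k},\mu)>\lambda^{s}(A_{k},\mu)$ for all $k$, so that the Oseledets line fields $E^{u,A_{k}}$ and $E^{s,A_{k}}$ are defined.

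The plan is to bound $\ell$ from below by means of invariant measures of the projective cocycles $F_{A_{k}}$ on $M\times\Pone$. For a fiber-bunched cocycle with distinct exponents the most expanded Oseledets section is invariant under the stable holonomy and the least expanded one under the unstable holonomy; hence $m_{k}:=\{x\mapsto\delta_{E^{u,A_{k}}_{x}}\}$ is an $s$-state for $A_{k}$ with $\int\phi_{A_{k}}\,dm_{k}=\lambda^{u}(A_{k},\mu)$, and $n_{k}:=\{x\mapsto\delta_{E^{s,A_{k}}_{x}}\}$ is a $u$-state for $A_{k}$ with $\int\phi_{A_{k}}\,dn_{k}=\lambda^{s}(A_{k},\mu)=-\lambda^{u}(A_{k},\mu)$, where $\phi_{B}(x,[v])=\log(\norm{B(x)v}/\norm{v})$ is the norm cocycle. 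Since the stable and unstable holonomies depend continuously on the cocycle in the fiber-bunched regime \cite{ASV13} and the $\phi_{A_{k}}$ converge uniformly to $\phi_{A}$, any weak-$\ast$ subsequential limits $m$ of $(m_{k})$ and $n$ of $(n_{k})$ are $F_{A}$-invariant measures projecting to $\mu$, with $m$ an $s$-state and $n$ a $u$-state for $A$, and $\int\phi_{A}\,dm=\ell$ and $\int\phi_{A}\,dn=-\ell$. As $L>\lambda^{s}(A,\mu)$, Oseledets theory forces every $F_{A}$-invariant measure projecting to $\mu$ to be a convex combination of $\mu^{u}:=\{x\mapsto\delta_{E^{u,A}_{x}}\}$ and $\mu^{s}:=\{x\mapsto\delta_{E^{s,A}_{x}}\}$ (a point of $\Pone$ outside $\{E^{u}_{x},E^{s}_{x}\}$ is forward asymptotic to $E^{u}$ and backward asymptotic to $E^{s}$, hence receives no mass from an invariant measure, by Poincar\'e recurrence); so $m=t\mu^{u}+(1-t)\mu^{s}$ and $n=s\mu^{u}+(1-s)\mu^{s}$ with $(2t-1)L=\ell$ and $(2s-1)L=-\ell$, using $\int\phi_{A}\,d\mu^{u}=L$ and $\int\phi_{A}\,d\mu^{s}=-L$.

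Suppose, for contradiction, that $\ell<L$; then $t<1$ and $s>0$. Since $\mu^{u}$ and $\mu^{s}$ are mutually singular (as $E^{u,A}_{x}\neq E^{s,A}_{x}$ for $\mu$-a.e.\ $x$) and $m$ is an $s$-state in which $\mu^{u}$ is automatically an $s$-state, the component $\mu^{s}$ must itself be an $s$-state, i.e.\ $E^{s,A}$ is invariant under the stable holonomy; being the least expanded Oseledets section it is also invariant under the unstable holonomy, so $E^{s,A}$ is $su$-holonomy invariant. Its disintegration $\delta_{E^{s,A}}$ is bounded, so by the regularity theorem of \cite{ASV13} --- this is the point at which accessibility and center-bunching of $f$ are used --- $E^{s,A}$ agrees $\mu$-a.e.\ with a continuous $A$-invariant line field. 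The symmetric argument applied to $n$ and $\mu^{u}$ shows that $E^{u,A}$ is likewise $su$-holonomy invariant and continuous. Thus $\ell<L$ would force the trivial bundle $M\times\real^{2}$ to split continuously as $E^{s,A}\oplus E^{u,A}$ into $A$-invariant line fields; after conjugating by a continuous frame, $A$ would be cohomologous to a diagonal cocycle $x\mapsto\operatorname{diag}(\beta(x),\beta(x)^{-1})$ with $\int\log\beta\,d\mu=-L<0$. Conversely, whenever $A$ admits no such continuous invariant splitting the argument gives $\ell\ge L$, hence, with upper semicontinuity, $\lambda^{u}(A_{k},\mu)\to\lambda^{u}(A,\mu)$.

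It therefore remains to prove lower semicontinuity of $\lambda^{u}$ at a fiber-bunched cocycle which is continuously diagonalizable and has $\lambda^{u}(A,\mu)>\lambda^{s}(A,\mu)$, and \textbf{this is the main obstacle}. A diagonal cocycle is exactly the configuration in which, over a non-accessible base, the Bochi / Bocker--Viana mechanism lets the exponents collapse --- Theorem~\ref{teoB} is built on such a configuration --- so one must exploit accessibility quantitatively to control the off-diagonal entries produced by a perturbation and keep the top exponent from dropping below $L$. Theorem~\ref{teoA} already rules out collapse all the way to $0$, and one expects its proof --- which shows that a putative zero-exponent invariant measure would be an $su$-state of the wrong regularity --- to upgrade, via a renormalization/energy estimate carried out uniformly along the sequence $(A_{k})$ in the rough-regularity holonomy framework of \cite{ASV13}, to the statement that $\lambda^{u}(A_{k},\mu)$ cannot accumulate at any value strictly below $L$. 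The remaining ingredients --- upper semicontinuity, closedness of the families of $s$- and $u$-states under simultaneous limits of the cocycle and the measure, and the Oseledets description of the $F_{A}$-invariant measures --- are routine.
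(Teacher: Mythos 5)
The statement you are asked to prove is labelled a \emph{conjecture} in the paper, and the paper offers no proof of it: the authors only remark that, by \cite[Corollary 4]{LMY}, continuity holds on an open and dense subset of the fiber-bunched cocycles. So there is no argument of theirs to compare yours against, and the relevant question is whether your proposal closes the conjecture. It does not, and you say so yourself. The first three paragraphs are a sensible and essentially standard reduction: upper semicontinuity of $\lambda^u$ from the subadditive formula; Theorem~\ref{teoA} to discard vanishing exponents along the sequence; the fact that $x\mapsto\delta_{E^{u,A_k}_x}$ is an $s$-state and $x\mapsto\delta_{E^{s,A_k}_x}$ a $u$-state with the right integrals; passage to weak-$*$ limits; the description of $F_A$-invariant measures over $\mu$ as convex combinations of $\delta_{E^{u,A}}$ and $\delta_{E^{s,A}}$ (this is \cite[Proposition 3.1]{BP15}, which the paper also uses); and the invariance-principle/regularity step from \cite{ASV13} turning a drop of the exponent into a continuous $su$-invariant splitting. (One step you gloss over: the disintegrations $x\mapsto\delta_{E^{u,A_k}_x}$ are a priori only measurable, so closedness of the family of $s$-states under simultaneous limits of cocycle and measure needs the local-product-structure argument of \cite{ASV13}, not just uniform convergence of holonomies; this is fixable but not free.)

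The genuine gap is exactly where you flag it: the case of a fiber-bunched cocycle admitting a continuous $A$-invariant splitting $E^{s,A}\oplus E^{u,A}$, i.e.\ a cocycle cohomologous to $x\mapsto\operatorname{diag}(\beta(x),\beta(x)^{-1})$. Your reduction shows that any failure of lower semicontinuity must occur at such a cocycle, but it gives no mechanism for excluding a partial collapse $0<\ell<L$ there. This is precisely the hard core of the problem: Theorem~\ref{teoA} only rules out collapse to $\ell=0$ (its proof hinges on the limit conditional measures having two atoms of equal weight $\tfrac12$, which is special to the zero-exponent situation and does not persist when $\ell>0$, where the weights become $t=\tfrac{1}{2}(1+\ell/L)\neq\tfrac12$ and the atomic-counting contradiction of Section~\ref{sec: atomic} is unavailable). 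The appeal to a ``renormalization/energy estimate carried out uniformly along the sequence'' is a hope, not an argument. So the proposal is an honest and correctly executed reduction of the conjecture to its essential difficulty, but it is not a proof, and the conjecture remains open after it.
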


As a consequence of \cite[Corollary 4]{LMY} (see also \cite{ASV13}) it follows that the previous conjecture is true in an open and dense subset of the fiber-bunched elements of H\"{o}lder continuous $\SL$-valued cocycles giving more evidences of its veracity.

\section{Preliminary results}
In this section we recall some classical notions and present some useful results that are going to be used in the proof of our main theorem. Let $f:M\to M$, $A\in H^\alpha(M)$ and $\mu$ be as in Theorem \ref{teoA}.

\subsection{Accessibility and holonomies}\label{sec:accessibility}
 
Given $x,y\in M$, we write $x\sim^s y$ whenever $y\in \cW^s(x)$. Observe that this is an equivalence relation and moreover, is $f$-invariant. That is, if $x\sim^s y$ then $f(x)\sim^s f(y)$.  Analogously, we write $x\sim^u z$ if $z\in \cW^u(x)$.
 
An \emph{$su$-path} from $x$ to $y$ is a path connecting $x$ and $y$ which is a concatenation of finitely many subpaths, each of which lies entirely in a single leaf of $\cW^s$ or a single leaf of $\cW^u$. Every sequence of points $x=z_0,z_1,\dots,z_n=y$, such that $z_i\sim^* z_{i+1}$ for $*=s$ or $u$, and $i=0,\dots,n-1$ defines a unique $su$-path. An \emph{su-loop} or a \emph{closed $su$-path} is an $su$-path beginning and ending at the same point. If $\gamma_1$ is an $su$-path given by $z_0,\dots,z_n$ and $\gamma_2$ is an $su$-path given by $z'_0,z'_1,\dots,z'_m$, with $z'_0=z_n$, we define $\gamma_1\wedge \gamma_2$ as the $su$-path given by $z_0,\dots,z_n,z'_1,\dots,z'_m $.

We say that an $su$-path $\gamma$ defined by the sequence $x=z_0,z_1,\dots,z_n=y$ is a \emph{$(K,L)$-path} if $n\leq K$ and $d_{\cW^*}(z_{i+1},z_i)\leq L$ for every $i=1,\ldots ,n-1$ where $d_{\cW^*}$ is the distance induced by the Riemannian strucutre on the submanifold $\cW^*$ for $*=s,u$. For simplicity we write $x\sim^*_L y$ if $d_{\cW^*}(z_{i+1},z_i)\leq L$ for every $i=1,\ldots ,n-1$. Observe that, by the compactness of $M$ and continuity of stable manifolds of bounded size, the space of $(K,L)$-paths is compact. In particular,
 
\begin{lemma}{\cite[Lemma~4.5]{Wliv}}\label{l.KLpath}
There exist constants $K$ and $L$ such that every pair of points in $M$ can be connected by an $(K,L)$-path.
\end{lemma}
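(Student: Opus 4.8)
The plan is to deduce the statement from accessibility together with a compactness argument. First I would record the basic topological facts: since $f$ is accessible, the $su$-equivalence relation (the transitive closure of $\sim^s$ and $\sim^u$) has $M$ as its only nonempty saturated class, so any two points are joined by \emph{some} $su$-path; the issue is to control the number of legs and the length of each leg uniformly. Fix $L_0>0$ small enough that local stable and unstable leaves $\cW^s_{L_0}(x)$ and $\cW^u_{L_0}(x)$ of size $L_0$ are embedded discs varying continuously with $x$ (this uses continuity of the foliations $\cW^s,\cW^u$, which holds since $f$ is $C^r$ partially hyperbolic). For a point $x$ and an integer $k\ge 1$, let $AC_k(x)\subset M$ denote the set of points reachable from $x$ by an $su$-path with at most $k$ legs, each of length at most $L_0$ in its respective leaf; set $AC_\infty(x)=\bigcup_k AC_k(x)$.

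Next I would show $AC_\infty(x)=M$ for every $x$. The set $\bigcup_{x}\,\{x\}\times AC_\infty(x)$ is exactly the equivalence relation generated by $\sim^s_{L_0}$ and $\sim^u_{L_0}$; but following short legs repeatedly one can traverse any leaf of $\cW^s$ or $\cW^u$, so this is the full $su$-equivalence relation, whose only nonempty saturated set is $M$ by accessibility. Hence each $AC_\infty(x)$ is $su$-saturated and nonempty, so $AC_\infty(x)=M$. Consequently $M=\bigcup_{k\ge1}AC_k(x)$.

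Now the compactness step, which I expect to be the main point. I claim each $AC_k(x)$ has nonempty interior once $k$ is large, and more: there is a single $K$ with $AC_K(x)=M$ for all $x$ simultaneously. Consider the map $\Phi_k$ sending a point $x$, a choice of leg-types $(*_1,\dots,*_k)\in\{s,u\}^k$, and leg-lengths/directions in the local leaves, to the endpoint of the resulting $(k,L_0)$-path; by continuity of the local foliations this is a continuous map from a compact parameter space, so its image $\mathrm{Im}\,\Phi_k(x)=AC_k(x)$ is compact, and $AC_k(x)$ depends ``lower-semicontinuously'' on $x$ in the sense that if $AC_k(x_0)\supset \overline{B(z,r)}$ then $AC_{k}(x)\supset B(z,r/2)$ for $x$ near $x_0$ (shrinking legs slightly to absorb the perturbation). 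Since $M=\bigcup_k AC_k(x_0)$ is a compact manifold, by a Baire-category argument some $AC_{k_0}(x_0)$ has nonempty interior, and then $AC_{2k_0}(x_0)=M$: indeed for any $y$, $AC_{k_0}(y)$ also has nonempty interior (by the same argument applied at $y$, using that the relevant $k$ can only be needed finitely often and compactness of $M$ lets us take a uniform $k_1$), and two open sets in the connected manifold $M$ whose ``reach sets'' exhaust $M$ must meet, giving an $su$-path $x_0\rightsquigarrow w \rightsquigarrow y$; uniformity in both endpoints then yields a single $K$ and we take $L=L_0$. The genuine obstacle is making the ``uniform in $x$'' quantifier rigorous: one must upgrade, via compactness of $M$ and the continuous dependence of local leaves, the pointwise statements $AC_\infty(x)=M$ to a \emph{uniform} bound on the number of legs; the cleanest route is to fix the open cover $\{AC_{k}^\circ(x)\}$ argument above, extract a finite subcover of $M$, and let $K$ be twice the largest index appearing. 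Since this is precisely \cite[Lemma~4.5]{Wliv}, I would cite it rather than reproduce the compactness bookkeeping in full.
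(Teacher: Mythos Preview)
The paper does not give its own proof of this lemma: it is stated purely as a citation of \cite[Lemma~4.5]{Wliv}, with only the preceding remark that ``by the compactness of $M$ and continuity of stable manifolds of bounded size, the space of $(K,L)$-paths is compact.'' Your proposal ends in exactly the same place---citing \cite{Wliv}---so in that sense it matches the paper's treatment perfectly; you simply add a heuristic sketch that the paper omits.

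On the sketch itself: the overall architecture (accessibility gives $AC_\infty(x)=M$, Baire category gives interior for some $AC_{k_0}(x_0)$, then compactness upgrades to a uniform $K$) is indeed the standard route in \cite{Wliv}. But the step you flag as ``the genuine obstacle'' is genuinely not handled by what you wrote. The sentence ``two open sets in the connected manifold $M$ whose `reach sets' exhaust $M$ must meet'' is not correct as stated---two nonempty open subsets of a connected manifold need not intersect---and the passage from ``$AC_{k_0}(x_0)$ has interior'' to ``$AC_{2k_0}(x_0)=M$'' requires more than you indicate: one uses that once $AC_{k_0}(x_0)$ contains an open set $U$, symmetry of the $su$-relation gives $x_0\in AC_{k_0}(u)$ for every $u\in U$, and then any $y\in M$ can be reached from some $u\in U$ by a controlled path (using that $U$ is open and the local product structure/continuity of leaves propagates interior), after which compactness of $M$ yields the uniform bound. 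Since you explicitly defer this bookkeeping to \cite{Wliv}, and the paper does the same, there is no discrepancy to report.
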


For every pair of points $x,y\in M$ so that $x\sim^s y$, our fiber-bunched assumption assures that the limit 
 $$
 H^{s,A}_{xy}=\lim_{n\to+\infty}A^n(y)^{-1}\circ A^n(x)
 $$
exists (see \cite[Proposition~3.2]{ASV13}). Moreover, for every $L>0$,
 $$
(x,y,A)\to H^{s,A}_{xy} \text{ is continuous on } \cW^s_L \times H^\alpha(M) 
$$
where $\cW^s_L = \{ (x,y)\in M\times M; x\sim^s_L y \}$ (see \cite[Remark~3.4]{ASV13}). In particular,
\begin{remark}\label{r.unifcont} 
Given a sequence $\{A_k\}_{k\in \mathbb{N}}$ converging to $A$ in $H^\alpha(M)$, since $\cW^s_L$ is compact, 
$$
\{\cW^s_L \ni (x,y) \to H^{s,A_k}_{xy}\}_{k\in \mathbb{N}}
$$
is equi-continuous for $k$ sufficiently large.
\end{remark}

The family of maps $H^{s,A}_{xy}$ is called \emph{an stable holonomy} for the cocycle $(A, f)$. It is easy to verify that (see \cite[Proposition 3.2]{ASV13}) for $x\sim^sy$ and $z\sim^sy$,
\begin{displaymath}
H^{s,A}_{xx}=Id \quad \text{ and } \quad H^{s,A}_{xy}=H^{s,A}_{zy}\circ H^{s,A}_{xz} 
\end{displaymath}
and 
\begin{displaymath}
H^{s,A}_{f^j(x)f^j(y)}=A^j(y)H^{s,A}_{xy}A^j(x)^{-1} \quad \forall j\geq 0.
\end{displaymath}

Similarly, for $x\sim^u y$  we define the \emph{unstable holonomy} $H^{u,A}_{xy}$ as the stable holonomies for $(A^{-1},f^{-1})$. If $\gamma$ is the $su$-path defined by the sequence $z_0,z_1,\dots,z_n$ then we write $H^A_\gamma=H^{*,A}_{z_{n-1}z_n}\circ \ldots \circ H^{*,A}_{z_{0}z_1}$ for $*\in \{s,u\}$.

\subsection{Disintegrations and $su$-invariance}
We say that a measure $m$ on $M\times \proj$ \emph{projects} on $\mu$ if $\pi_{*}m=\mu$ where $\pi$ is the canonical projection $\pi: M\times \proj \to M$. Observe that any such measure admits a \emph{disintegration} with respect to the partition $\{\{x\}\times \proj\}_{x\in M}$ and the measure $\mu$, that is, there exists a family of measures $\{m_x\}_{x\in M}$ on $\{\{x\}\times \proj\}_{x\in M}$ so that for every measurable $B\subset M\times \proj$,
\begin{itemize}
\item $x\to m_x(B)$ is measurable,
\item $m_x(\{x\}\times \proj)=1$ and 
\item $m(B)=\int_M m_x (B\cap (\{x\}\times \proj) ) d\mu(x)$.
\end{itemize}
Moreover, such disintegrantion is essentially unique \cite{Rok62}. Identifying each fiber $\{x\}\times \proj$ with $\proj$, we can think of $x\to m_x$ as a map from $M$ to the space of probability measures on $\proj$ endowed with the weak$^*$ topology.

Let $F_A:M\times \proj \to M\times \proj$ be the map given by
$$F_A(x,v)=(f(x), [A(x)v])$$
and $m$ be an $F_A$-invariant measure projecting on $\mu$. We say that  $m$ is \emph{$s$-invariant} if there exists a total measure set $M^s\subset M$ such that for every $x,y\in M^s$ satisfying $x\sim^s y$ we have ${H^{s,A}_{xy}}_*m_x=m_y$. Such measure $m$ is also known as an \emph{$s$-state}. Analogously, we say that $m$ is \emph{$u$-invariant} (or an \emph{$u$-state}) if the same is true replacing stable by unstable in the previous definition. We say that $m$ is \emph{$su$-invariant} if it is simultaneously $s$-invariant and $u$-invariant. The main property of $su$-ivariant measures is the following

\begin{proposition}{\cite[Theorem D]{ASV13}}
Any $F_A$-invariant measure $m$ projecting on $\mu$ which is $su$-invariant admits a disintegration $\{m_x\}_{x\in M}$ for which $M^s=M^u=M$ and so that $m_x$ depends continuously on the base point $x\in M$ in the weak$^*$ topology. 
\end{proposition}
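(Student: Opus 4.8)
The plan is to upgrade an arbitrary, a priori only a.e.\ defined, disintegration of $m$ to a genuinely continuous one in three stages: use one‑sided invariance to obtain a version that is continuous \emph{along} each of the two foliations; form the closure of its graph inside $M\times\mathcal{P}(\proj)$ (with $\mathcal{P}(\proj)$ the compact metrizable space of probability measures on $\proj$) and use accessibility together with the honest $F_A$‑invariance to show that this closure is saturated under holonomy transport; and finally argue that it is the graph of a continuous function.

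\emph{Step 1: leafwise continuity.} Fix any disintegration $\{m_x\}$ and a total measure set on which the $s$‑ and $u$‑holonomy relations hold; intersecting with its iterates we may assume it is $f$‑invariant, and a Fubini argument — legitimate because $\mu$ lies in the Lebesgue class, so its conditionals along $\cW^s$ and $\cW^u$ are equivalent to the leaf volumes and the foliations are absolutely continuous — lets us further assume it meets almost every stable leaf and almost every unstable leaf in a full conditional measure set. On each good unstable leaf $W$ set $\hat m_x:=(H^{u,A}_{zx})_*m_z$ for a reference point $z\in W$; $u$‑invariance makes this independent of $z$, and continuity of $(z,x)\mapsto H^{u,A}_{zx}$ along $\cW^u_L$ (the unstable counterpart of the continuity on $\cW^s_L\times H^\alpha(M)$ recalled in Section~\ref{sec:accessibility}) makes $x\mapsto\hat m_x$ continuous along $W$; the null set of exceptional leaves is filled in by limits from nearby good leaves. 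Since $\hat m=m$ off a null set, essential uniqueness of disintegrations shows $\{\hat m_x\}$ is again a disintegration of $m$. Repeating the construction with $\cW^s$ and intersecting the good sets, we obtain one version — still written $\{m_x\}$ — and a full‑measure set $M_0$ such that for every $x\in M_0$ the subsets of $\cW^s_{loc}(x)$ and of $\cW^u_{loc}(x)$ lying in $M_0$ have full conditional measure, and $(H^{*,A}_{xy})_*m_x=m_y$ whenever $x,y\in M_0$ lie on a common $\cW^*$‑leaf, $*\in\{s,u\}$.

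\emph{Step 2: an $su$‑saturated compact set.} Let $\Lambda\subset M\times\mathcal{P}(\proj)$ be the closure of $\{(x,m_x):x\in M_0\}$. It is compact, and since $M_0$ is dense, $\Lambda$ projects onto $M$. The relation $(H^{*,A}_{xy})_*m_x=m_y$ is preserved by $f$, using $H^{*,A}_{f^jx\,f^jy}=A^j(y)H^{*,A}_{xy}A^j(x)^{-1}$ together with the $F_A$‑invariance $m_{f(x)}=A(x)_*m_x$; hence $\Lambda$ is $F_A$‑invariant. Moreover, given $(x,\xi)\in\Lambda$ with $\xi=\lim m_{x_n}$, $x_n\in M_0$, $x_n\to x$, and given $y\in\cW^u(x)$, one picks $y_n\in\cW^u(x_n)\cap M_0$ with $y_n\to y$ (possible because each $x_n$ is good), and then $(y,(H^{u,A}_{xy})_*\xi)=\lim(y_n,m_{y_n})\in\Lambda$ by continuity of the holonomies along bounded‑length legs; the same works for $\cW^s$. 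Thus $\Lambda$ is saturated under the stable and unstable holonomy actions, so by accessibility and Lemma~\ref{l.KLpath} it is saturated under transport $H^A_\gamma$ along all $(K,L)$‑paths.

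\emph{Step 3: $\Lambda$ is a continuous graph — the crux.} \textbf{The main obstacle} is single‑valuedness: one must show $\Lambda$ has exactly one point over each $x$. The difficulty is genuine, since $su$‑loop holonomies are generally nontrivial and, because the partially hyperbolic splitting has a center direction, there is \emph{no} local product structure for $\cW^s$ and $\cW^u$ that would let one join nearby points by short $su$‑paths — so the transverse variation of the disintegration cannot be controlled by leafwise continuity alone. If $\Lambda$ had two points over some $x$, then holonomy saturation, accessibility, and injectivity of the holonomies on $\mathcal{P}(\proj)$ would force two distinct points over \emph{every} $x$; I expect this to be ruled out by combining the $F_A$‑invariance and the $su$‑saturation of $\Lambda$ with the essential uniqueness of the disintegration of the genuine measure $m$ — informally, a ``second sheet'' of $\Lambda$ would yield a second $F_A$‑invariant $su$‑state projecting onto $\mu$, which uniqueness excludes. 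Once $\Lambda$ is the graph of some $\tilde m:M\to\mathcal{P}(\proj)$, compactness of $\Lambda$ makes $\tilde m$ continuous along $\cW^u$‑leaves (being the $u$‑holonomy transport of a continuously varying measure), and the $(K,L)$‑path saturation together with continuity of the holonomies along bounded‑length legs upgrades this to continuity on all of $M$. Finally $\tilde m=m$ a.e., so $\{\tilde m_x\}$ is a disintegration of $m$, and by construction $(H^{*,A}_{xy})_*\tilde m_x=\tilde m_y$ for \emph{all} $x\sim^* y$, that is $M^s=M^u=M$. Center‑bunching is used throughout only to guarantee that $H^{s,A}$ and $H^{u,A}$ exist and depend continuously on the base points.
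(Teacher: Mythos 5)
First, note that the paper does not prove this proposition at all: it is quoted verbatim from \cite[Theorem~D]{ASV13}, so there is no internal argument to compare with, and your attempt has to be measured against the actual proof of Avila--Santamaria--Viana. Measured that way, it has a genuine gap, and you have in fact located it yourself: Step~3. Everything before it (leafwise continuity of a good version of the disintegration along $\cW^s$ and $\cW^u$ on a full measure set, holonomy saturation of the graph closure $\Lambda\subset M\times\mathcal{P}(\proj)$) is the comparatively soft part; the entire content of \cite[Theorem~D]{ASV13} is precisely the passage from separate essential invariance along the two foliations to global continuity, i.e.\ the single-valuedness of your $\Lambda$. The fix you propose does not work: a ``second sheet'' of $\Lambda$ is produced by taking limits over sets of measure zero, so it is not a second disintegration of the given measure $m$, and essential uniqueness of the disintegration of $m$ says nothing about it. At best it would assemble into another candidate $su$-state projecting on $\mu$, and uniqueness of $su$-states is neither a hypothesis nor true in general (it fails, e.g., in non-accessible situations such as the example of Theorem~\ref{teoB}, and nothing in your argument up to that point has used accessibility in a way that distinguishes the two settings quantitatively). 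Similarly, ``filling in the null set of exceptional leaves by limits from nearby good leaves'' in Step~1 presupposes exactly the kind of transverse control whose absence you acknowledge as the crux.

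Your closing remark that center bunching ``is used throughout only to guarantee that $H^{s,A}$ and $H^{u,A}$ exist and depend continuously'' is also a misattribution, and it signals where the missing idea lives: existence and continuity of the holonomies come from the fiber-bunching of $A$, whereas center bunching of $f$ (together with $C^2$ regularity, the Lebesgue class of $\mu$ and accessibility) enters in the Burns--Wilkinson-type argument with fake invariant foliations and adapted Lebesgue density points by which \cite{ASV13} prove that a bi-essentially invariant section of a bundle is essentially continuous; that density-point machinery is exactly what replaces the missing local product structure of $\cW^s$ and $\cW^u$ and forces your $\Lambda$ to be a continuous graph. Without reproducing (or explicitly invoking) that argument, Step~3 remains an unproved assertion, so the proposal does not constitute a proof of the proposition.
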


\subsection{Trivial holonomies on $su$-loops} \label{sec: tivial holonomies}
In this section we explain how in certain specific situations we can perform a change of coordinates that makes the cocycle $(A,f)$ constant without changing its Lyapunov exponents.

Let us assume that $H^A_{\gamma}=\id$ for every $su$-loop $\gamma$ with at most $3K$ legs and each of them with length at most $ L$. Recall that we call such loops $(3K, L)$-loops. In particular, $H^A_\gamma=\id$ for \emph{every} $su$-loop $\gamma$. Indeed, observe initially that if $\gamma$ is a $(2K, L)$-path from $x$ to $y$ then, by Lemma \ref{l.KLpath}, there exists a $(K,L)$-path $\gamma'$ from $x$ to $y$ so that $H^A_\gamma=H^A_{\gamma'}$. In fact, if $-\gamma'$ denotes the path $\gamma'$ with opposite orientation then $\gamma \wedge (-\gamma ')$ is a $(3K, L)$-loop and $$H^A_{\gamma}\circ (H^A_{\gamma '})^{-1}=H^A_{\gamma}\circ H^A_{-\gamma '} = H^A_{\gamma \wedge (-\gamma ')}=\id.$$ 
Hence, $H^A_\gamma=H^A_{\gamma'}$. Now, taking any $su$-loop $\gamma$ with an arbitrary number of legs whose lengths are at most $ L$ we can decompose it as $\gamma=\gamma_1\wedge \cdots\wedge \gamma_k$, where every $\gamma_i$ is a $(K, L)$-path. In particular, $\gamma_{k-1}\wedge\gamma_k$ is a $(2K, L)$-path and by the previous argumment we can replace it by a $(K,L)$-path $\gamma'_{k-1}$ with the same starting and ending points and, so that $H^A_{\gamma_{k-1}\wedge\gamma_k}=H^A_{\gamma'_{k-1}}$. Thus, taking $\gamma'=\gamma_1\wedge \cdots\wedge\gamma_{k-2}\wedge \gamma'_{k-1}$ we have that $\gamma$ and $\gamma'$ have the same starting and ending points and $H^A_\gamma=H^A_{\gamma'}$.  Repeating this procedure a finite number of times we get some $(K,L)$-loop $\gamma''$ such that $H^A_\gamma=H^A_{\gamma''}=\id$. Finally, observing that any $su$-loop $\gamma$ can be transformed into an $su$-loop with legs of size at most $L$ just by breaking one ``large'' leg into several with smaller sizes we conclude that $H^A_\gamma=\id$ for every $su$-loop proving our claim. As a consequence we get that if $\gamma$ is an $su$-path connecting $x$ and $y$ then $H^A_{\gamma}$ \emph{does not depend} on $\gamma$. In fact, if $\gamma _1$ and $\gamma _2$ are $su$-paths connecting $x$ and $y$ then $\gamma _1\wedge (-\gamma _2)$ is an $su$-loop and thus $H^A_{\gamma _1}\circ (H^A_{\gamma _2})^{-1} =H^A_{\gamma _1}\circ H^A_{-\gamma _2} = H^A_{\gamma _1 \wedge (-\gamma _2)}=\id$ as claimed. Let us denote this common value simply by $H^A_{xy}$. From the properties of the holonomies and the fact that any two points $x,y\in M$ can be connected by a $(K,L)$-path it follows that
\begin{itemize}
\item $H^A_{yz}H^A_{xy}=H^A_{xz}$,
\item $A(y)H^A_{xy}=H^A_{f(x)f(y)}A(x)$,
\item $A\to H^A_{xy}$ is uniformly continuous for any pair of points $x,y\in M$ and
\item $\|H^A_{xy}\|\leq N$ for some $N>0$ and any $x,y\in M$.
\end{itemize}

Fix $x\in M$ and, given $y\in M$, consider the following transformation
$$
\hA(y)=H^A_{f(y)x} A(y)H^A_{xy}.
$$
Then, $\hA^2(y)=\hA(f(y))\hA(y)=H^A_{f^2(y)x} A(f(y))H^A_{xf(y)}H^A_{f(y)x} A(y)H^A_{xy}$ and consequently $\hA^2(y)=H^A_{f^2(y)x}  A^2(y)H^A_{xy}$. More generally, $\hA^n(y)=H^A_{f^n(y)x}  A^n(y)H^A_{xy}$ for every $n\in \mathbb{N}$ and consequently $(\hA,f)$ and $(A,f)$ have the same Lyapunov exponents. Moreover, for any $z,y\in M$,
$$
\begin{aligned}
\hA(z)^{-1}\hA(y)=&\left(H^A_{f(z)x} A(z)H^A_{xz}\right)^{-1}H^A_{f(y)x} A(y)H^A_{xy}\\
=&H^A_{zx} A(z)^{-1}H^A_{xf(z)}H^A_{f(y)x} A(y)H^A_{xy}\\
=&H^A_{zx} A(z)^{-1}H^A_{f(y)f(z)} A(y)H^A_{xy}\\
=&H^A_{zx} A(z)^{-1} A(z)H^A_{yz}H_{xy}\\
=&H^A_{zx}H^A_{yz}H_{xy}\\
=&H^A_{zx}H^A_{xz}\\
=&\id.
\end{aligned}
$$
In particular, $\hA$ is constant and consequently its largest Lyapunov exponent is the logarithm of the norm of the greatest eigenvalue of $\hA$. Summarizing, if $H^A_{\gamma}=\id$ for every $(3K,L)$-loop $\gamma$ then we can perform a change of coordinates that makes the cocycle $(A,f)$ constant without changing its Lyapunov exponents. This is going to be used in Section \ref{sec: concluison}.

\subsection{$\SL$ matrices and invariant measures on $\proj$}\label{sec: SL2r} The following result plays an important part in our proof below.

\begin{proposition}\label{l.hyperbolic.matrix}
For each $n\in \mathbb{N}$, let $L_n$ be a $ \SL$ matrix so that $L_n\xrightarrow{n\to +\infty} \id$ and let $\eta_n$ be an $L_n$-invariant measure on $\proj$ so that $\eta_n\xrightarrow{n\to +\infty} \frac{1}{2}( \delta_p+\delta_q)$ for some $p,q\in \proj$ with $p\neq q$. Then for every $n$ sufficiently large either $L_n$ is hyperbolic or $L_n=\id$.
\end{proposition}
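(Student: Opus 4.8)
The plan is to argue by contradiction, ``renormalizing'' $L_n$ near the identity to extract an infinitesimal generator, and then reading off the Jordan type of $L_n$ from that of the generator. Suppose the statement fails; then there is a subsequence, which I relabel as $(L_n)$, along which $L_n\neq\id$ and $L_n$ is not hyperbolic for every $n$, while still $L_n\to\id$ and $\eta_n\to\frac{1}{2}(\delta_p+\delta_q)=:\eta_\infty$. Set $\epsilon_n:=\norm{L_n-\id}>0$, so $\epsilon_n\to 0$, and $M_n:=\epsilon_n^{-1}(L_n-\id)$, so that $\norm{M_n}=1$; after a further subsequence $M_n\to M$ with $\norm{M}=1$. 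Since $\det L_n=1$ and $\det(\id+\epsilon_n M_n)=1+\epsilon_n\operatorname{tr}(M_n)+\epsilon_n^2\det(M_n)$, one gets $\operatorname{tr}(M_n)=-\epsilon_n\det(M_n)$, which tends to $0$; hence $M$ is traceless.

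The core step will be to show that $\eta_\infty$ is invariant under the flow $\Phi_t$ on $\proj$ induced by the one-parameter subgroup $t\mapsto e^{tM}$ of $\SL$. Fixing $t\in\real$ and setting $k_n:=\lfloor t/\epsilon_n\rfloor$, so $k_n\epsilon_n\to t$, I would check that $L_n^{k_n}\to e^{tM}$: writing $L_n=\exp\big(\log(\id+\epsilon_n M_n)\big)$ and using $\log(\id+\epsilon_n M_n)=\epsilon_n M_n+O(\epsilon_n^2)$, one has $k_n\log(\id+\epsilon_n M_n)=k_n\epsilon_n M_n+O(k_n\epsilon_n^2)\to tM$ because $k_n\epsilon_n^2=(k_n\epsilon_n)\epsilon_n\to 0$. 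Consequently the projective maps $L_n^{k_n}$ converge uniformly to $\Phi_t$. Since $\eta_n$ is $L_n$-invariant it is $L_n^{k_n}$-invariant, and passing to the limit in $(L_n^{k_n})_*\eta_n=\eta_n$ — using uniform convergence of the maps together with weak$^*$ convergence of the measures — yields $(\Phi_t)_*\eta_\infty=\eta_\infty$ for all $t$. As $\eta_\infty$ is carried by the two distinct points $p,q$, each $\Phi_t$ permutes $\{p,q\}$; since $t\mapsto\Phi_t$ is continuous, $\Phi_0=\id$, and $\{p,q\}$ is discrete, a connectedness argument forces $\Phi_t(p)=p$ and $\Phi_t(q)=q$ for all $t$. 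Thus $p$ and $q$ are two distinct $M$-invariant lines, i.e. distinct real eigendirections of $M$; being traceless, $M$ then has eigenvalues $\pm\mu$ with $\mu>0$, so $\det M=-\mu^2<0$.

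To conclude, I would use $\operatorname{tr}(L_n)=\operatorname{tr}(\id)+\epsilon_n\operatorname{tr}(M_n)=2-\epsilon_n^2\det(M_n)$; since $\det(M_n)\to\det M<0$ this gives $\operatorname{tr}(L_n)>2$ for all large $n$, so $L_n$ is hyperbolic, contradicting the choice of subsequence and proving the proposition. I expect the delicate point to be the passage to the infinitesimal generator — verifying that $L_n^{\lfloor t/\epsilon_n\rfloor}$ converges to $e^{tM}$ and that this, combined with the weak$^*$ convergence $\eta_n\to\eta_\infty$, upgrades $L_n$-invariance into $\Phi_t$-invariance of $\eta_\infty$. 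Everything after that is elementary linear algebra of $2\times2$ matrices together with the elementary description of the fixed points of the $\proj$-action of a one-parameter subgroup of $\SL$.
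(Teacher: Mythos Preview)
Your proof is correct and takes a genuinely different route from the paper's argument. The paper proceeds by a case analysis on the trace of $L_n$: it rules out the parabolic case directly (the only invariant measure is a single Dirac mass), and for the elliptic case it conjugates $L_n$ to a rotation $R_{\theta_n}$ with $\theta_n\to 0$, shows that the pushed-forward measures $(P_n)_*\eta_n$ converge to Lebesgue on $\proj$ (treating rational and irrational $\theta_n$ separately), and then analyses the pull-back via the possibly unbounded conjugating matrices $P_n^{-1}$ using the quasi-projective compactification of Bonatti--Viana to reach a contradiction with the two-atom limit.

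Your renormalization argument is more unified: extracting a traceless generator $M$ from $(L_n-\id)/\norm{L_n-\id}$, upgrading $L_n$-invariance of $\eta_n$ to $e^{tM}$-invariance of the limit via $L_n^{\lfloor t/\epsilon_n\rfloor}\to e^{tM}$, and reading off that $M$ has two distinct real eigendirections (hence $\det M<0$) avoids both the elliptic/parabolic split and the quasi-projective machinery. The only point worth making explicit is why $\mu>0$: since $p\neq q$ give two linearly independent eigenvectors, $M$ is diagonalizable, so if both eigenvalues vanished then $M=0$, contradicting $\norm{M}=1$. What the paper's approach buys is that it stays entirely within classical $\SL$ facts about invariant measures for rotations and parabolics; what your approach buys is a shorter, conceptually cleaner argument that generalizes more readily (the same idea would work whenever one can read off the Jordan type from the infinitesimal generator).
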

\begin{proof}
The proof is by contradiction. We start observing that as $L_n$ converges to the identity all the matrices have positive trace for $n$ sufficiently large. Consequently, if $L_n$ is not the identity we have three posibilities: if the trace $tr(L_n)>2$ then the matrix $L_n$ is hyperbolic, if $tr(L_n)<2$ then the matrix $L_n$ is elliptic and is conjugated to a rotation of angle $\theta_n=\arccos (\frac{tr(L_n)}{2})$ and if $tr(L_n)=2$ then the matrix $L_n$ is parabolic and is non diagonalizable with both eigenvalues equal to 1. 

Suppose initially that all the matrices $L_n$ have $tr(L_n)<2$. In particular, for each $n\in \mathbb{N}$ there exists $P_n\in \SL$ so that $L_n=P_n^{-1} R_{\theta_n} P_n$ where $R_{\theta_n}$ stands for the rotation of angle $\theta_n$. Moreover, since $tr(L_n)\xrightarrow{n\to +\infty} 2$, we get that $\theta_n\xrightarrow{n\to +\infty} 0$.

Now, for each $n\in \mathbb{N}$ let us consider $\nu_n={P_n}_*\eta_n$ which is an $R_{\theta_n}$-invariant measure. We start observing that there exists a subsequence $\{n_j\}_j$ so that $\nu_{n_j}\xrightarrow{j\to +\infty} \text{Leb}$ where $\text{Leb}$ stands for the Lebesgue measure on $\proj$. Indeed, if $\theta_n$ is an irrational number then we know that the only $R_{\theta_n}$-invariant measure is $\text{Leb}$. In particular, $\nu_n=\text{Leb}$. Thus, if there are infinitely many values of $n$ for which  $\theta_n$ is an irrational number we are done.

Suppose then that $\theta_n$ is a rational number for every $n\in \mathbb{N}$. In particular, $R_{\theta_n}$ is periodic and denoting by $q_n$ its period, since $\theta_n\xrightarrow{n\to +\infty} 0$, we have that $q_n\xrightarrow{n\to +\infty} + \infty$. 

In what follows we make an abuse of notation thinking of $\proj$ as $[0,1]$ identifying the extremes of the interval.

Let $\varphi:\proj \to \mathbb{R}$ be a continuous map and $\varepsilon>0$. Since $\proj$ is compact, there exists $\delta >0$ so that $\mid \varphi (x)-\varphi(y)\mid <\varepsilon$ whenever $d(x,y)<\delta$. Thus, taking $n\gg 0$ so that $q_n> \frac{1}{\delta}$ we get that $\mid \varphi (x)-\varphi(\frac{j}{q_n})\mid <\varepsilon$ for every $x\in [\frac{j}{q_n}, \frac{j+1}{q_n})$ and $j=0,1,\ldots,q_n-1$. In particular,

$$ \abs{\frac{1}{\nu_n([\frac{j}{q_n},\frac{j+1}{q_n}))} \int_{\frac{j}{q_n}}^{\frac{j+1}{q_n}}\varphi d\nu_n-\varphi(\frac{j}{q_n})}<\varepsilon.$$ 
Now, observing that $\nu_n([\frac{j}{q_n},\frac{j+1}{q_n}))=\frac{1}{q_n}$ for every $j=0,1,\ldots , q_n-1$ once $\nu_n$ is $R_{\theta_n}$-invariant, summing the previous expression for $j$ from $0$ up to $q_n-1$ and dividing both sides by $q_n$ we get that 
 $$
\abs{\int_0^{1}\varphi d\nu_n- \frac{1}{q_n}\sum_{j=0}^{q_n-1}\varphi(\frac{j}{q_n})}<\varepsilon.
$$
On the other hand, since $\varphi$ is Riemann integrable,
$$
\lim_{n\to\infty}\frac{1}{q_n}\sum_{j=0}^{q_n-1}\varphi(\frac{j}{q_n})=\int \varphi d\text{Leb}
$$
which implies that $\nu_n\xrightarrow{n\to +\infty} \text{Leb}$ as claimed. So, restricting to a subsequence, if necessary, we may assume that $\nu_n\xrightarrow{n\to +\infty} \text{Leb}$.

We now analyse the accumulation points of $\eta_n={P^{-1}_n}_* \nu_n$. If $\{P^{-1}_n\}_n$ stay in a compact set of $\SL$ then, taking a subsequence if necessary, we may assume that there exists $P\in \SL$ so that $P^{-1}_n\to P$. In particular, $\lim_{n\to \infty}\eta_n=P_*\text{Leb}$ which contradicts our assumption since $P_*{\text{Leb}}$ is non-atomic. If $\norm{P^{-1}_n}\to \infty$ then we can work on the compactification of quasi-projective transformations (see \cite{LLE} or \cite[Section 6.1]{BoV04}). In particular, restricting to a subsequence, if necessary, we have that $P^{-1}_n\to Q$, where $Q$ is defined outside some kernel (a one dimensional subspace) and the image $Im (Q)\subset \proj$ of $Q$ is a one dimensional subspace. Thus, as the kernel has zero Lebesgue measure we can apply \cite[Lemma~2.4]{AvV2} to conclude that
$$
\lim_{n\to\infty}{P^{-1}_n}_* \nu_n=Q_* \text{Leb}=\delta_{Im(Q)}
$$
which is a contradiction. Consequently, $L_n$ may be elliptic only for finitely many values of $n$.

To conclude the proof it remains to rule out the cases when $tr(L_n)=2$ and the matrix are non diagonalizable for infinitely many values of $n$. So, suppose $L_n$ is non diagonalizable and both of its eigenvalues are $1$ for every $n$. Then by the Jordan's normal decomposition we have
$$
L_n=P_n^{-1}\left(\begin{array}{cc}
1& 1\\
0&1
\end{array}\right) P_n
$$
for some $P_n\in GL(2,\mathbb{R})$. Consequently, the only invariant measure for $L_n$ is atomic and have only one atom contradicting the fact that $\eta_n\xrightarrow{n\to +\infty} \frac{1}{2}( \delta_p+\delta_q)$. Thus, $L_n$ can be parabolic and different from $\id$ only for finitely many values of $n$ concluding the proof of the proposition.
\end{proof}

\subsection{$PSL(2,\mathbb{R})$ cocycles} \label{sec: PSL2r}

Let us consider the \emph{projective special linear group} given by $PSL(2,\mathbb{R})=SL(2,\mathbb{R})/\{\pm Id\}$. That is, given $A,B\in SL(2,\mathbb{R})$ let $\sim$ be the equivalence relation given by $A\sim B$ if and only if $A=B$ or $A=-B$. Given $A\in SL(2,\mathbb{R})$, let $[[A]]=\{B\in SL(2,\mathbb{R}); B\sim A \}$ be the equivalence class of $A$ with respect to $\sim$. Then, $PSL(2,\mathbb{R})=\{[[A]]; A\in SL(2,\mathbb{R}) \}$. Observe that the norm $\norm{\cdot}$ on $\SL$ naturally induces a norm, which we are going to denote by the same symbol, on $PSL(2,\mathbb{R})$: given $A\in \SL$, $\norm{[[A]]}:=\norm{A}=\norm{-A}$.

Given $A:M\to SL(2,\mathbb{R})$ let us consider $\tilde{A}:M\to PSL(2,\mathbb{R})$ given by $\tilde{A}(x)=[[A(x)]]$. By Kingman's subadditive ergodic theorem \cite{Ki68} and the ergodicity of $\mu$ it follows that the limit
\begin{displaymath}
L(\tilde{A},\mu)=\lim_{n\to +\infty}\frac{1}{n}\log \|\tilde{A}^n(x)\| 
\end{displaymath}
exists and is constant for $\mu$-almost every $x\in M$. In particular, since $\|A^n(x)\|=\|\tilde{A}^n(x)\|$ for every $x\in M$ and $n\in \mathbb{N}$, we get that $\lambda^u(A,\mu)=L(\tilde{A},\mu)$. Another simple observation is that for every $v\in \Pone$, $[A(x)v]=[\tilde{A}(x)v]$ and, consequently, the action induced by $A$ on $\Pone$ coincide with the action of $\tilde{A}$ on $\Pone$. Moreover, $H^{\tilde{A}}_{\gamma} =[[H^{A}_{\gamma}]] \in PSL(2,\mathbb{R})$ is well defined and have similar properties with respect to $\tilde{A}$ as those of $H^{A}_{\gamma}$ with respect to $A$ described in Section \ref{sec: tivial holonomies}. In particular, a similar conclusion to that of Section \ref{sec: tivial holonomies} holds for $\tilde{A}$ whenever $H^{\tilde{A}}_{\gamma}=[[\id]]$ for every $(3K,L)$-loop $\gamma$: we can perform a change of coordinates that makes the cocycle $(\tilde{A},f)$ constant without changing $L(\tilde{A},\mu)$. Consequently, denoting this new cocycle by $\hat{\tilde{A}}$, it follows that $L(\tilde{A},\mu)$ is equal to logarithm of the norm of the greatest eigenvalue of any representative of $\hat{\tilde{A}}$.

Furthermore, the results of Section \ref{sec: SL2r} also have a counterpart for $PSL(2,\mathbb{R})$ cocycles. In order to state it, recall that a sequence $\{\tilde{L}_n\}_n$ in $PSL(2,\mathbb{R})$ is said to converge to $\tilde{L}\in PSL(2,\mathbb{R})$ if there are representatives $L$ and $L_n$ in $\SL$ of $\tilde{L}$ and $\tilde{L}_n$, respectively, so that the sequence $\{L_n\}_n$ converges to $L$ in $\SL$.

\begin{proposition}\label{l.hyperbolic.matrix2}
For each $n\in \mathbb{N}$, let $\tilde{L}_n \in PSL(2,\mathbb{R})$ be so that $\tilde{L}_n\xrightarrow{n\to +\infty} [[\id]]$ and let $\eta_n$ be an $\tilde{L}_n$-invariant measure on $\proj$ so that $\eta_n\xrightarrow{n\to +\infty} \frac{1}{2}( \delta_p+\delta_q)$ for some $p,q\in \proj$ with $p\neq q$. Then for every $n$ sufficiently large either $\tilde{L}_n$ is hyperbolic or $\tilde{L}_n=[[\id]]$.
\end{proposition}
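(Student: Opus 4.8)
The plan is to reduce the statement to Proposition~\ref{l.hyperbolic.matrix} by lifting the sequence $\{\tilde{L}_n\}_n$ to $\SL$. The key observation is that $-\id$ acts trivially on $\proj$, so any representative of $\tilde{L}_n$ in $\SL$ induces the same projective transformation; in particular, $\eta_n$ being $\tilde{L}_n$-invariant is equivalent to $\eta_n$ being invariant under any (hence every) representative of $\tilde{L}_n$ in $\SL$. Likewise, the notion of a matrix being hyperbolic is insensitive to replacing it by its negative, since $\operatorname{tr}(-B)=-\operatorname{tr}(B)$, so $\abs{\operatorname{tr}}$ is well defined on $PSL(2,\mathbb{R})$ and still detects hyperbolicity.

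First I would invoke the definition of convergence in $PSL(2,\mathbb{R})$ recalled just before the statement: from $\tilde{L}_n\xrightarrow{n\to+\infty}[[\id]]$ we obtain a representative $L\in\{\pm\id\}$ of $[[\id]]$ and representatives $L_n\in\SL$ of $\tilde{L}_n$ with $L_n\to L$ in $\SL$. Replacing each $L_n$ by $-L_n$ if necessary --- which changes neither the class $\tilde{L}_n$ nor the induced projective transformation --- I may assume $L=\id$, so that $L_n\xrightarrow{n\to+\infty}\id$. By the observation above, $\eta_n$ is $L_n$-invariant, and by hypothesis $\eta_n\xrightarrow{n\to+\infty}\frac{1}{2}(\delta_p+\delta_q)$ with $p\neq q$. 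Hence the sequence $\{L_n\}_n$ satisfies all the hypotheses of Proposition~\ref{l.hyperbolic.matrix}.

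Applying that proposition, for every $n$ sufficiently large either $L_n$ is hyperbolic or $L_n=\id$. In the first case $\tilde{L}_n=[[L_n]]$ is hyperbolic, and in the second case $\tilde{L}_n=[[\id]]$. This is exactly the desired dichotomy, so the proof is complete once these observations are assembled.

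Since the argument is essentially a translation between $\SL$ and $PSL(2,\mathbb{R})$, I do not expect a serious obstacle; the only point requiring (minor) care is the bookkeeping in the lift, namely ensuring that the representatives fed into Proposition~\ref{l.hyperbolic.matrix} converge to $\id$ rather than to $-\id$, and checking --- as done above --- that both the invariance hypothesis and the conclusion about hyperbolicity are independent of the choice of representative.
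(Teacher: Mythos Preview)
Your proposal is correct and follows essentially the same approach as the paper: lift to $\SL$ and apply Proposition~\ref{l.hyperbolic.matrix}. The only cosmetic difference is that the paper selects, for each $\tilde{L}_n$, the representative in $\SL$ with positive trace (which automatically forces convergence to $\id$ rather than $-\id$), whereas you first invoke the definition of convergence to obtain representatives $L_n\to L\in\{\pm\id\}$ and then negate the whole sequence if $L=-\id$; both bookkeeping devices achieve the same end.
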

This result follows easily from Proposition \ref{l.hyperbolic.matrix}: for every $\tilde{L}_n \in PSL(2,\mathbb{R})$ we can take a representative of $\tilde{L}_n$ in $\SL$ with positive trace and apply the aforementioned result to these representatives.

\section{Proof of the main result}
Let $f:M\to M$, $A:M\to SL(2,\mathbb{R})$ and $\mu $ be given as in Theorem \ref{teoA} and suppose there exists a sequence $\{A_k\}_{k\in \mathbb{N}}$ in $H^\alpha(M)$ with $\lambda^u(A_k,\mu)=\lambda^s(A_k,\mu)=0$ for every $k\in \mathbb{N}$ and such that $A_k\xrightarrow{k\to +\infty} A$.

For each $k\in \mathbb{N}$, let $m_k$ be an ergodic $F_{A_k}$-invariant probability measure on $M\times \proj$ projecting on $\mu$ where $F_{A_k}$ is defined similarly to $F_A$. Passing to a subsequence if necessary, we may assume that the sequence $\{m_k\}_{k}$ converges in the weak$^*$ topology to some measure $m$ which is, as one can easily check, $F_A$-invariant and projects on $\mu$. In order to prove Theorem \ref{teoA} we are going to analyse these families of measures and its respective disintegrations.

\subsection{Continuity and convergence of conditional measures}

It follows from Remark \ref{r.unifcont} and \cite[Theorem C]{ASV13} and its proof that 

\begin{corollary}\label{cor:equicont}
For every $k$ sufficiently large there exists an $su$-invariant disintegration $\{m^k_x:x\in M\}$ of $m_k$ with respect to the partition $\{\{x\}\times \proj:x\in M\}$ and $\mu$ such that 
$$\{M \ni x\to m^k_{x}\}_{k\gg 0} \text{ is equi-continuous.}$$
\end{corollary}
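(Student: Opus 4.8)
The plan is to deduce the existence of the $su$-invariant continuous disintegrations from \cite[Theorems~C and D]{ASV13} recalled above, and then to upgrade the continuity of each map $x\mapsto m^k_x$ to equi-continuity in $k$ by tracking the constants in the proof of \cite[Theorem~D]{ASV13} and feeding in Remark \ref{r.unifcont}. First I would fix $k_1\in\mathbb N$ so that $A_k$ is fiber-bunched for every $k\ge k_1$; this is possible since fiber-bunching is an open condition in $H^\alpha(M)$ and $A$ is fiber-bunched. For such $k$ the holonomies $H^{s,A_k}$ and $H^{u,A_k}$ are defined and obey the cocycle relations of Section \ref{sec:accessibility}. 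Since $\lambda^u(A_k,\mu)=\lambda^s(A_k,\mu)$, the extremal Lyapunov exponents of $(A_k,f)$ coincide, so \cite[Theorem~C]{ASV13} applies and $m_k$, being $F_{A_k}$-invariant and projecting on $\mu$, is $su$-invariant; then \cite[Theorem~D]{ASV13} (the Proposition above) furnishes, for each $k\ge k_1$, a disintegration $\{m^k_x:x\in M\}$ of $m_k$ that is $su$-invariant with $M^s=M^u=M$ and for which $x\mapsto m^k_x$ is continuous. This settles existence; only the uniformity in $k$ remains.

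For the equi-continuity I would inspect the construction of the continuous representative inside the proof of \cite[Theorem~D]{ASV13} and follow how its modulus of continuity depends on the cocycle. Since the disintegration is $su$-invariant everywhere, $m^k_y=(H^{s,A_k}_{xy})_*m^k_x$ for all $x\sim^s y$ and $m^k_z=(H^{u,A_k}_{xz})_*m^k_x$ for all $x\sim^u z$, so the variation of $x\mapsto m^k_x$ along strong-stable and strong-unstable leaves is governed by the moduli of continuity of $(x,y)\mapsto H^{s,A_k}_{xy}$ on $\cW^s_L$ and of $(x,y)\mapsto H^{u,A_k}_{xy}$ on $\cW^u_L$, together with the uniform bounds $\sup_{k\ge k_1}\sup_{(x,y)\in\cW^*_L}\|H^{*,A_k}_{xy}\|<\infty$ (valid because $A_k\to A$). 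The continuity of $x\mapsto m^k_x$ transverse to the $su$-leaves is obtained in \cite{ASV13} from this leafwise information by an argument resting on the center-bunching hypothesis and the local product structure of $\mu$; its quantitative form yields a modulus of continuity for $x\mapsto m^k_x$ that depends only on the two holonomy moduli above, on the (fixed) center-bunching data of $f$, and on the (fixed) constants $K,L$ of Lemma \ref{l.KLpath} and the product-structure constants of $\mu$. By Remark \ref{r.unifcont} the families $\{(x,y)\mapsto H^{*,A_k}_{xy}\}_{k}$, $*=s,u$, are equi-continuous on $\cW^*_L$ for $k$ large; inserting this into the previous estimate produces a single modulus of continuity valid for all $k\ge k_2$ and some $k_2\ge k_1$, which is exactly the asserted equi-continuity of $\{x\mapsto m^k_x\}_{k\gg 0}$.

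The step I expect to be the main obstacle is precisely the claim in the second paragraph that the continuity estimate of \cite[Theorem~D]{ASV13} degrades only through the holonomy moduli and through data attached to $f$ and $\mu$: this requires re-reading that proof with care, in particular the passage from an almost-everywhere statement to a genuinely continuous representative and the control of continuity in the center direction, to make sure that no step secretly introduces a $k$-dependent loss. Every quantity that actually enters there — holonomy norms and moduli, the center-bunching exponents, the constants $K,L$, and the H\"older and product-structure constants of $\mu$ — is uniformly controlled for $k$ large (the first by Remark \ref{r.unifcont}, the rest because they depend only on $f$ and $\mu$), so once this is checked the conclusion is bookkeeping rather than a new idea.
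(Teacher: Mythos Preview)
Your proposal is correct and follows essentially the same route as the paper. The paper's own argument is a single sentence preceding the corollary---``It follows from Remark~\ref{r.unifcont} and \cite[Theorem~C]{ASV13} and its proof''---and your write-up is precisely an unpacking of that sentence: fiber-bunching is open so $A_k$ has holonomies for large $k$; vanishing exponents plus \cite[Theorem~C]{ASV13} give $su$-invariance of $m_k$; \cite[Theorem~D]{ASV13} supplies the continuous disintegration; and the uniformity in $k$ is obtained by feeding the equi-continuity of the holonomy families (Remark~\ref{r.unifcont}) into the quantitative form of the continuity argument in \cite{ASV13}. The only cosmetic discrepancy is that you attribute the continuity estimate to the proof of Theorem~D while the paper points to Theorem~C and its proof; in \cite{ASV13} these arguments are adjacent and share the same ingredients, so this is not a substantive difference.
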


As an application of this corollary we get that

\begin{proposition}\label{p.convergence}
The measure $m$ is $su$-invariant and admits a continuous disintegration $\{m_x\}_{x\in M}$ with respect to $\{\{x\}\times \proj\}_{x\in M}$ and $\mu$ so that $m^k_x$ converges uniformly on $M$ to $m_x$.
\end{proposition}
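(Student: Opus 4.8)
The plan is to produce the disintegration of $m$ as a uniform limit of the disintegrations $\{m^k_x\}$ supplied by Corollary~\ref{cor:equicont}, by an Arzel\`a--Ascoli argument, and then to transfer $su$-invariance by passing to the limit in the defining identities. Concretely, for $k$ large one views $x\mapsto m^k_x$ as a continuous map from the compact metric space $M$ into $\mathcal{P}(\proj)$, the space of probability measures on $\proj$, which is compact and metrizable in the weak$^*$ topology since $\proj$ is a compact metric space; fix a metric $d$ inducing this topology. By Corollary~\ref{cor:equicont} the family $\{x\mapsto m^k_x\}_{k\gg 0}$ is equi-continuous, so Arzel\`a--Ascoli gives a subsequence $\{k_j\}$ and a continuous map $x\mapsto m_x$, $M\to\mathcal{P}(\proj)$, with $\sup_{x\in M} d(m^{k_j}_x, m_x)\to 0$.

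To see that $\{m_x\}_{x\in M}$ disintegrates $m$, take any continuous $\phi\colon M\times\proj\to\R$. The map $(x,\nu)\mapsto \int_\proj \phi(x,\cdot)\,d\nu$ is jointly continuous, hence uniformly continuous, on the compact set $M\times\mathcal{P}(\proj)$, so $x\mapsto\int_\proj\phi(x,\cdot)\,dm^{k_j}_x$ converges uniformly, a fortiori in $L^1(\mu)$, to $x\mapsto\int_\proj\phi(x,\cdot)\,dm_x$. Passing to the limit in
\[
\int_{M\times\proj}\phi\,dm_{k_j}=\int_M\Big(\int_\proj \phi(x,\cdot)\,dm^{k_j}_x\Big)\,d\mu(x),
\]
and using $m_{k_j}\to m$ in the weak$^*$ topology, one obtains $\int\phi\,dm=\int_M\int_\proj\phi(x,v)\,dm_x(v)\,d\mu(x)$ for all continuous $\phi$; this identifies $\{m_x\}$ with the (essentially unique) disintegration of $m$, now with the additional property of depending continuously on $x$.

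For $su$-invariance it is enough to verify the $s$-relation for pairs with $x\sim^s_L y$, since an arbitrary pair $x\sim^s y$ is joined by a finite chain of such pairs along $\cW^s(x)$ and the cocycle identity $H^{s,A}_{xy}=H^{s,A}_{zy}\circ H^{s,A}_{xz}$ propagates ${H^{s,A}_{xy}}_*m_x=m_y$ along the chain. For $x\sim^s_L y$ we have ${H^{s,A_{k_j}}_{xy}}_*m^{k_j}_x=m^{k_j}_y$ for every $j$; by Remark~\ref{r.unifcont}, $H^{s,A_{k_j}}_{xy}\to H^{s,A}_{xy}$ uniformly, and since pushforward is continuous under uniform convergence of continuous maps together with weak$^*$ convergence of measures (test against $g\in C(\proj)$ and use $g\circ H^{s,A_{k_j}}_{xy}\to g\circ H^{s,A}_{xy}$ uniformly), letting $j\to\infty$ gives ${H^{s,A}_{xy}}_*m_x=m_y$. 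Running the same argument for $(A^{-1},f^{-1})$ yields the $u$-relation, so $m$ is $su$-invariant with $M^s=M^u=M$.

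Finally, uniform convergence of the \emph{entire} sequence $m^k_x$ to $m_x$ follows from uniqueness: any subsequential uniform limit obtained as above is again a continuous $su$-invariant disintegration of $m$, hence agrees with $\{m_x\}$ $\mu$-almost everywhere by essential uniqueness of disintegrations \cite{Rok62}, and thus everywhere, since both maps are continuous and $\mu$ has full support. Hence every subsequence of $\{x\mapsto m^k_x\}$ has a further subsequence converging uniformly to the same limit, which gives convergence of the whole sequence. I expect the $su$-invariance step to be the main obstacle: the holonomies $H^{s,A_k}_{xy}$ are only known to vary continuously with the cocycle on the sets $\cW^*_L$ of boundedly-related pairs, so one has to be careful both with the reduction to bounded legs and with interchanging the limit and the pushforward operation; the Arzel\`a--Ascoli part and the weak$^*$ passages are routine once that is in place.
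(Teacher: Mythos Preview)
Your argument is correct and follows the same overall strategy as the paper: extract a uniform limit of the disintegrations via Arzel\`a--Ascoli, check that it disintegrates $m$, and pass the $su$-holonomy relations through the limit using continuity of holonomies in the cocycle. Two minor technical choices differ. First, the paper applies Arzel\`a--Ascoli scalarwise to $x\mapsto\int\varphi\,dm^k_x$ for each $\varphi$ in a countable dense set and then invokes Riesz--Markov, whereas you apply it directly to the measure-valued maps into the compact metric space $\mathcal{P}(\proj)$; both work. Second, to identify the limit family as a disintegration of $m$, the paper proves and uses an auxiliary result (Lemma~\ref{l.prodconvergence}) allowing test functions of the form $\chi_D\times\varphi$ with $D$ measurable, while you test only against continuous $\phi$ on $M\times\proj$ and rely on essential uniqueness of disintegrations; your route is a bit more economical. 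Finally, you make explicit a point the paper leaves implicit: upgrading subsequential to full-sequence convergence via uniqueness of the continuous disintegration, using that $\mu$, being in the Lebesgue class, has full support so that two continuous families agreeing $\mu$-a.e.\ agree everywhere.
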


In order to prove the previous proposition we need the following auxiliary result.

\begin{lemma}\label{l.prodconvergence}
Let $X$ and $Y$ be compact metric spaces, $\mu$ a Borel probability measure on $X$ and $\{\nu_k\}_{k\in \mathbb{N}}$ be a sequence of probability measures on $X \times Y$ projecting on $\mu$ and converging in the weak$^*$ topology to some measure $\nu$. Then for every measurable function $\rho:X\to \real$ and every continuous function $\varphi:Y\to \real$, 
$$
\lim_{k\to\infty} \int \rho\times \varphi d\nu_k=\int \rho\times\varphi d\nu.$$
\end{lemma}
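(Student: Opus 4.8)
The plan is to use that, since every $\nu_k$ has $X$-marginal $\mu$ and $\nu_k\to\nu$ weak$^*$, the limit $\nu$ also has $X$-marginal $\mu$ (apply weak$^*$ convergence to functions of the form $g\circ\pi$ with $g\in C(X)$); this is what lets us replace the merely measurable $\rho$ by a continuous function on $X$ at a cost that is small \emph{uniformly in} $k$. Throughout write $(\rho\times\varphi)(x,y)=\rho(x)\varphi(y)$, set $C=\|\varphi\|_\infty<\infty$, and record that for any probability measure $\lambda$ on $X\times Y$ with $\pi_*\lambda=\mu$ and any measurable $g:X\to\real$ one has $\int|g\times\varphi|\,d\lambda\le C\int|g|\,d\mu$. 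A first reduction: replacing $\rho$ by the truncations $\rho_N=\max(-N,\min(N,\rho))$, the inequality just noted gives $|\int\rho\times\varphi\,d\lambda-\int\rho_N\times\varphi\,d\lambda|\le C\int|\rho-\rho_N|\,d\mu$ for $\lambda\in\{\nu_k,\nu\}$, and $\int|\rho-\rho_N|\,d\mu\to0$ as $N\to\infty$ by dominated convergence; hence it suffices to treat a bounded $\rho$, say $|\rho|\le R$ (which is anyway the situation in the applications).

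Next I would fix $\varepsilon>0$ and invoke Lusin's theorem (valid since $X$ is compact metric, so $\mu$ is Radon): there is a closed set $K\subset X$ with $\mu(X\setminus K)<\varepsilon$ and $\restrict{\rho}{K}$ continuous. By the Tietze extension theorem, followed by composition with the continuous truncation $t\mapsto\max(-R,\min(R,t))$, we obtain a continuous $\tilde\rho:X\to\real$ with $|\tilde\rho|\le R$ and $\tilde\rho=\rho$ on $K$, so that $|\rho-\tilde\rho|\le 2R\,\car_{X\setminus K}$ and therefore $\int|\rho-\tilde\rho|\,d\mu\le 2R\varepsilon$. Since $(x,y)\mapsto\tilde\rho(x)\varphi(y)$ is continuous on the compact space $X\times Y$, weak$^*$ convergence gives $\int\tilde\rho\times\varphi\,d\nu_k\to\int\tilde\rho\times\varphi\,d\nu$. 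Using $\pi_*\nu_k=\pi_*\nu=\mu$ together with the projection inequality we get, for $\lambda\in\{\nu_k,\nu\}$,
$$\Big|\int\rho\times\varphi\,d\lambda-\int\tilde\rho\times\varphi\,d\lambda\Big|\le C\int|\rho-\tilde\rho|\,d\mu\le 2RC\varepsilon,$$
and combining the three estimates yields $\limsup_{k\to\infty}|\int\rho\times\varphi\,d\nu_k-\int\rho\times\varphi\,d\nu|\le 4RC\varepsilon$. Letting $\varepsilon\to0$ finishes the proof.

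The only genuine point — and the only place the hypothesis that the $\nu_k$ project onto the \emph{same} $\mu$ is used — is that the Lusin error $\int|\rho-\tilde\rho|\,d\nu_k=\int|\rho-\tilde\rho|\,d\mu$ does not depend on $k$, so it can be absorbed before passing to the limit; the rest is the standard weak$^*$/Lusin routine, and I do not expect any real obstacle. One should just be a little careful to note that $\pi_*\nu=\mu$, and (if $\rho$ is not assumed bounded to begin with) to perform the truncation reduction first, so that Tietze's extension can be applied with a uniform sup-norm bound and the error stays controlled.
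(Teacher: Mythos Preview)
Your proof is correct and follows essentially the same route as the paper's: approximate $\rho$ in $L^1(\mu)$ by a continuous function, use that the $X$-marginal is the fixed measure $\mu$ to make the approximation error uniform in $k$, and then apply weak$^*$ convergence to the continuous product. The paper's version is more terse---it simply takes a continuous $\hat\rho$ with $\int_X|\hat\rho-\rho|\,d\mu$ small and runs the same three-term estimate---whereas you spell out the truncation step, the fact that $\pi_*\nu=\mu$, and the Lusin/Tietze construction of the approximant; these are exactly the points the paper leaves implicit.
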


\begin{proof}
Given $\varepsilon>0$ let $\hrho :X\to \real$ be a continuous function so that $\int_X\abs{\hrho-\rho}d\mu<\frac{\varepsilon}{2\sup{\varphi}}$. Take $k_0\in \mathbb{N}$ such that for every $k>k_0$, 
$$
\abs{\int \hrho\times \varphi d\nu_k-\int \hrho\times \varphi d\nu}<\frac{\varepsilon}{2}.
$$
Then, for $k>k_0$,
$$
\abs{\int \rho\times \varphi d\nu_k-\int \rho\times \varphi d\nu}<\sup{\varphi}\int_X\abs{\hrho-\rho}d\mu+\abs{\int \hrho\times \varphi d\nu_k-\int \hrho\times \varphi d\nu}<\varepsilon.
$$
\end{proof}

\begin{proof}[Proof of Proposition \ref{p.convergence}]
For each $k\in \mathbb{N}$, let $\{m^k_x\}_{x\in M}$ be the disintegration of $m_k$ given by Corollary \ref{cor:equicont}. We start observing that for every continuous function $\varphi:\proj\to\real$, by Arezel\`a-Aslcoi's theorem (recall Corollary \ref{cor:equicont}), there exists a subsequence of $\{\int_{\proj} \varphi dm^k_x \}_k$ such that $\int_{\proj} \varphi dm^{k_j}_x\to I_x(\varphi)$ uniformly on $M$. Taking a dense subset $\{\varphi_j\}_{j\in \mathbb{N}}$ of the space $C^0(\proj)$ of continuous functions $\varphi:\proj \to \mathbb{R}$ and using a diagonal argument, passing to a subsequence if necessary, we can suppose that  $\int_{\proj} \varphi dm^k_x\to I_x(\varphi)$ for every $\varphi \in C^0(\proj)$. It is easy to see that $I_x$ defines a positive linear functional on $C^0(\proj)$. Consequently, by Riesz-Markov's theorem, for every $x\in M$ there exists a measure $\hm_x$ on $\proj$ such that $I_x(\varphi)=\int \varphi d\hm_x$. 

On the other hand, letting $\{m_x\}_{x\in M}$ be a disintegration of $m$ with respect to $\{\{x\}\times \proj\}_{x\in M}$ and $\mu$ and invoking Lemma~\ref{l.prodconvergence} it follows that for every continuous function $\varphi:\proj\to\real$ and any $\mu$-positive measure subset $D\subset M$,
$$
\int _D\int_{ \proj}\varphi dm^k_x d\mu = \int_{D\times \proj} \varphi dm_k\to \int_{D\times \proj}\varphi dm =\int _D \int_{ \proj}\varphi dm_x d\mu.
$$ 
Consequently, $m_x=\hm_x$ for $\mu$ almost every $x\in M$. Thus, extending $m_x=\hm_x$ for every $x\in M$ we get a continuous disintegration of $m$ such that $m^k_x\to m_x$ uniformly on $x\in M$. In particular, by Remark \ref{r.unifcont} and the $su$-invariance of $m_k$ for every $k$ it follows that $m$ is also $su$-invariant as claimed.
\end{proof}

From now on we work exclusively with the disintegrations $\{m^k_x\}_{x\in M}$ and $\{m_x\}_{x\in M}$ of $m_k$ and $m$, respectively, given by Corollary \ref{cor:equicont} and the previous proposition.

Recall we are assuming $\lambda^u(A,\mu)>0>\lambda^s(A,\mu)$. Thus, letting $\mathbb{R}^2= E^{u,A}_{x}\oplus E^{s,A}_{x}$ be the Oseledets decomposition associated to $A$ at the point $ x\in M$, it follows from Proposition 3.1 of \cite{BP15} that for any $F_A$-invariant measure $m$, its conditional measures are of the form $m_x=a\delta_{E^{u,A}_x}+b\delta_{E^{s,A}_x}$ for some $a,b\in [0,1]$ such that $a+b=1$ where here and in what follows we abuse notation and identify a $1$-dimensional linear space $E$ with its class $[E]$ in $\Pone$. 

\begin{lemma} \label{lem: su-inv of Osel}
There exist continuous and $su$-invariant functions which coincide with $ x\to E^{s,A}_{x}, E^{u,A}_{x}$ for $\mu$-almost every point. By $su$-invariance we mean that for every (admissible) choice of $x,y,z\in M$, $H^{s,A}_{xy} E^{*}_x=E^*_y$ and $H^{u,A}_{xz} E^{*}_x=E^*_z$ for $*\in \{s,u\}$.
\end{lemma}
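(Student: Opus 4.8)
The plan is to produce the two continuous sections as the atoms of the conditional measures $\{m_x\}_{x\in M}$ of the limit measure $m$, the point being to show that these are two \emph{distinct} points of $\proj$ depending continuously on $x$ and correctly permuted by the holonomies. Recall from Proposition~\ref{p.convergence} that $m$ is $su$-invariant with a continuous disintegration $\{m_x\}_{x\in M}$, and that by \cite[Proposition~3.1]{BP15} (as recalled above) one has $m_x=a\,\delta_{E^{u,A}_x}+(1-a)\,\delta_{E^{s,A}_x}$ for $\mu$-a.e.\ $x$; since $m$ is $F_A$-invariant and $E^{u,A}_{f(x)}\ne E^{s,A}_{f(x)}$ for $\mu$-a.e.\ $x$, the weight $a$ is $f$-invariant, hence constant by ergodicity. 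I would also use that $\mu$ is the normalized volume, so $\supp\mu=M$ and ``$\mu$-a.e.'' means ``on a dense set''.

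\emph{The weight is nondegenerate.} First I would show $a\in(0,1)$ (in fact $a=\tfrac12$). Consider the continuous, uniformly bounded function $\phi_B(x,[v])=\log(\|B(x)v\|/\|v\|)$ on $M\times\proj$; telescoping gives $\tfrac1n\sum_{j=0}^{n-1}\phi_{A_k}(F_{A_k}^j(x,[v]))=\tfrac1n\log(\|A_k^n(x)v\|/\|v\|)$, which tends to $0$ for $m_k$-a.e.\ $(x,[v])$ because $\lambda^u(A_k,\mu)=\lambda^s(A_k,\mu)=0$, so Birkhoff's theorem forces $\int\phi_{A_k}\,dm_k=0$. Since $A_k\to A$ gives $\phi_{A_k}\to\phi_A$ uniformly and $m_k\to m$ in the weak$^*$ topology, $\int\phi_A\,dm=0$. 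On the other hand, $A(x)E^{*,A}_x=E^{*,A}_{f(x)}$ and Birkhoff's theorem give $\int\log\|A(x)|_{E^{u,A}_x}\|\,d\mu=\lambda^u(A,\mu)$ and $\int\log\|A(x)|_{E^{s,A}_x}\|\,d\mu=\lambda^s(A,\mu)=-\lambda^u(A,\mu)$ (using $\lambda^u=-\lambda^s$ for $\SL$-cocycles), so $0=\int\phi_A\,dm=(2a-1)\,\lambda^u(A,\mu)$ and hence $a=\tfrac12$.

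\emph{Holonomy invariance of the atoms.} Next I would record the ``automatic'' invariances: on a total set of pairs, $H^{s,A}_{xy}E^{s,A}_x=E^{s,A}_y$ and $H^{u,A}_{xz}E^{u,A}_x=E^{u,A}_z$. Indeed, for $\mu$-regular $x\sim^s y$ the identity $A^n(y)H^{s,A}_{xy}=H^{s,A}_{f^n(x)f^n(y)}A^n(x)$ together with $d_{\cW^s}(f^n(x),f^n(y))\to0$ (so $H^{s,A}_{f^n(x)f^n(y)}\to\id$) gives $\|A^n(y)(H^{s,A}_{xy}v)\|\asymp\|A^n(x)v\|$, and since $E^{s,A}$ is exactly the line realizing the smaller forward exponent $\lambda^s<\lambda^u$ this forces $H^{s,A}_{xy}E^{s,A}_x=E^{s,A}_y$; the unstable case is the same argument for $(A^{-1},f^{-1})$. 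Combining with the $su$-invariance of $m$: for a.e.\ $x\sim^s y$, pushing $m_x$ forward by $H^{s,A}_{xy}$ and cancelling the matching $\delta_{E^{s,A}_y}$-terms (legitimate since $a>0$) yields $H^{s,A}_{xy}E^{u,A}_x=E^{u,A}_y$, and symmetrically $H^{u,A}_{xz}E^{s,A}_x=E^{s,A}_z$. Hence both $m^u:=\{\delta_{E^{u,A}_x}\}$ and $m^s:=\{\delta_{E^{s,A}_x}\}$ are $F_A$-invariant measures projecting on $\mu$ which are $su$-invariant, so \cite[Theorem~D]{ASV13} provides for each a disintegration depending continuously on $x\in M$ with $M^s=M^u=M$. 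By essential uniqueness of disintegrations these agree $\mu$-a.e.\ with $\{\delta_{E^{u,A}_x}\}$, resp.\ $\{\delta_{E^{s,A}_x}\}$; since they vary continuously, coincide with Dirac masses on a dense set, and a weak$^*$-limit of Dirac masses is a Dirac mass, they are Dirac masses $\delta_{\bar E^{u,A}_x}$, $\delta_{\bar E^{s,A}_x}$ at \emph{every} $x\in M$, with $\bar E^{u,A},\bar E^{s,A}$ continuous; the property $M^s=M^u=M$ is precisely the asserted $su$-invariance, and $\bar E^{*,A}_x=E^{*,A}_x$ for $\mu$-a.e.\ $x$.

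\emph{Main obstacle.} I expect the crux to be the nondegeneracy step and the passage from ``$\mu$-a.e.'' to ``everywhere'': without $a\in(0,1)$ the atoms of $m_x$ could collapse onto a single Oseledets direction, so detecting \emph{both} $E^{u,A}$ and $E^{s,A}$ genuinely uses the fibered-exponent computation; and the conclusion that the continuous disintegrations from \cite[Theorem~D]{ASV13} are Dirac masses on \emph{all} of $M$, yielding sections that are $su$-invariant for \emph{all} admissible triples rather than merely $\mu$-a.e., relies on $\supp\mu=M$ and on the equality $M^s=M^u=M$ in that theorem. The intrinsic holonomy-invariances of $E^{s,A}$ and $E^{u,A}$, by contrast, are routine once the fiber-bunched holonomies are available.
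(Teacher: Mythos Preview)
Your proposal is correct and follows essentially the same route as the paper: both show $a,b>0$ by passing the identity $\int\Phi_{A_k}\,dm_k=0$ to the limit $\int\Phi_A\,dm=0$, then use the automatic $s$-invariance of $E^{s,A}$ (resp.\ $u$-invariance of $E^{u,A}$) to subtract off one Dirac term from the $su$-invariant $m_x$ and deduce $su$-invariance of the other Oseledets direction, and finally invoke \cite[Theorem~D]{ASV13} for continuity and the extension to all of $M$. Your write-up is more explicit (you compute $a=\tfrac12$ rather than merely $a,b>0$, and you spell out the contraction argument for the automatic invariances and the Dirac-mass propagation), but there is no genuine difference in strategy.
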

From now on we think of $ E^{s,A}_{x}$ and $E^{u,A}_{x}$ as continuous functions defined for every $x\in M$.
\begin{proof}
Recall $m_k$ is a $F_{A_k}$-invariant measure such that $m_k\to m$. Since $\lambda ^u(A_k,\mu)=0$ for every $k\in \mathbb{N}$ we get that $\int\Phi_{A_k}dm_k=0$ where $\Phi_{A_k}:M\times \proj \to \mathbb{R}$ is given by $\Phi_{A_k}(x,v)=\log \frac{\parallel A_k(x)v \parallel}{\parallel v\parallel}$. On the other hand,
$$
\int\Phi_{A_k}dm_k\to  \int\Phi_{A}dm.
$$
Thus, $ \int\Phi_{A}dm=0$ which implies that the numbers $a$ and $b$ given above are strictly larger than zero. Now, by Proposition~\ref{p.convergence} we know that $\{m_x\}_x$ is $su$-invariant. Consequently, since $E^{u,A}_x$ is $u$-invariant and $E^{s,A}_x$ is $s$-invariant, it follows  
$\delta_{E^{u,A}_x}=\frac{1}{a}(m_x-b\delta_{E^{s,A}_x})$ is also $s$-invariant. Analogously, $E^{s,A}_x$ is $u$-invariant. In particular, $E^{u,A}_x$ and $E^{s,A}_x$ are $su$-invariant. Continuity follows easily (see \cite[Theorem~D]{ASV13}).
\end{proof}

\subsection{Excluding the atomic case with a bounded number of atoms} \label{sec: atomic} 

In this subsection we prove that $m^k_{x_k}$ can not have a bounded number of atoms (with bound independent of $k$) for infinitely many values of $k\in \mathbb{N}$ and any $x_k\in M$. In order to do so, we need the following lemma.

\begin{lemma}\label{l.atomic}
If $m^k_y$ has an atom for some $y\in M$, then there exists $j=j(k)\in \mathbb{N}$ such that for every $x\in M$, there exist $v^1_x,\dots v^j_x\in \proj$ so that $m^k_x=\frac{1}{j}\sum_{i=1}^j\delta_{v^i_x}$.
\end{lemma}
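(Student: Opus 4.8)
The plan is to exploit the $su$-invariance of $m^k_x$ together with the transitivity of $su$-paths to propagate the atomic structure from a single fiber $\{y\}\times\proj$ to every fiber. First I would record the basic fact that the holonomy maps $H^{s,A_k}_{yx}$ and $H^{u,A_k}_{yx}$ act as invertible projective transformations on $\proj$, hence they send atoms to atoms and preserve the weight of each atom; by the $su$-invariance of the disintegration $\{m^k_x\}_{x\in M}$ (Corollary \ref{cor:equicont}) we have ${H^{s,A_k}_{xy}}_*m^k_x = m^k_y$ whenever $x\sim^s y$, and similarly for $u$. Chaining these identities along an $su$-path, for any two points $x, x'$ there is a composition $H$ of stable/unstable holonomies with $H_*m^k_x = m^k_{x'}$. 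Consequently, if $m^k_y$ has an atom at some point, then \emph{every} $m^k_x$ has at least one atom, and moreover the set of atom-weights appearing in $m^k_x$ is the same finite multiset for all $x\in M$ (being the push-forward of a fixed finite multiset under a measure-preserving bijection of the atoms).

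The key step is then to show that $m^k_x$ is in fact \emph{purely} atomic with all atoms of equal mass. Write $w_k = \max\{\, m^k_x(\{v\}) : v\in\proj,\ x\in M\,\}$ for the largest atom weight, which by the previous paragraph is independent of $x$ and is attained for every $x$. Let $j = j(k)$ be the number of atoms of maximal weight $w_k$ in $m^k_y$ (equivalently, in any $m^k_x$); this count is also $su$-invariant, hence the same for all $x$. The atomic part of mass $j w_k$ carried by the maximal atoms is therefore a well-defined piece of $m^k_x$ that varies $su$-invariantly. If $j w_k < 1$ we could subtract it off and renormalize to obtain a second $su$-invariant family of probability measures $\nu^k_x$ on $\proj$ with strictly fewer maximal atoms; iterating, one reduces to the situation where some $su$-invariant family has a single atom in each fiber. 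But an $su$-invariant family of probability measures that is a Dirac mass in one fiber is a Dirac mass in every fiber (holonomies are bijections), and this Dirac section is then an $F_{A_k}$-invariant section of $\proj$-valued points; combined with the fact that the corresponding Lyapunov exponent of $A_k$ vanishes, $\lambda^u(A_k,\mu)=0$, this forces $m^k_x$ itself to be supported on this section only if that section already accounts for all the mass — and in general it cannot, because if there were a second atom it too would propagate $su$-invariantly. Tracking this carefully shows the only possibility consistent with $m^k_x$ being a probability measure for every $x$ is $j w_k = 1$, i.e. $m^k_x = \frac1j\sum_{i=1}^j \delta_{v^i_x}$ with all atoms of weight $1/j$.

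Concretely I would organize the argument as follows: (i) fix $y$ with $m^k_y$ having an atom, and for each rational-looking weight level $w>0$ let $n_w(x)$ be the number of atoms of $m^k_x$ of weight exactly $w$; (ii) show $n_w(x)$ is independent of $x\in M$ using that any two points are joined by an $su$-path and that holonomies are projective bijections preserving $m^k$; (iii) conclude $m^k_x = \sum_w w\, \sigma^k_{w,x}$ where $\sigma^k_{w,x}$ is the (uniform) counting-type measure on the weight-$w$ atoms, each $\sigma^k_{w,x}$ depending $su$-invariantly on $x$; (iv) argue that $m^k_x$ has no continuous part, since a nontrivial continuous part would also be $su$-invariant and, pushed through the argument of Section \ref{sec: SL2r}/Proposition \ref{l.hyperbolic.matrix}, is incompatible with $\lambda^u(A_k,\mu)=0$ combined with the structure already available; (v) finally, since $m^k_y$ is a probability measure with finitely many atoms whose weights and multiplicities are globally constant, a short combinatorial/normalization step (e.g. using that $F_{A_k}$ permutes the atoms in each fiber and $\mu$ is ergodic) shows all atoms must share a common weight $1/j$, giving $m^k_x = \frac1j\sum_{i=1}^{j}\delta_{v^i_x}$.

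The main obstacle I expect is step (iv)–(v): ruling out a continuous component of $m^k_x$ and, more delicately, showing the atoms all carry the \emph{same} mass rather than several distinct masses. The equidistribution of weights should come from invariance under $F_{A_k}$ (which permutes atoms within each fiber and hence, over an ergodic base, cannot preserve a nontrivial partition of the atoms by weight unless each weight class has uniformly constant — and ultimately equal — size), but making this rigorous without circularity, and simultaneously excluding the continuous part using only the hypotheses already in force ($\lambda^u(A_k,\mu)=0$, $su$-invariance, and the convergence set-up), is the technically sensitive point of the lemma.
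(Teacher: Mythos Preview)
Your plan correctly identifies that $su$-invariance propagates atoms and their weights from fiber to fiber, but you miss the one ingredient that makes the argument short and removes both obstacles you flag in (iv)--(v): the measure $m_k$ was chosen to be \emph{ergodic} as an $F_{A_k}$-invariant measure on $M\times\proj$ (see the first paragraph of Section~4), and this ergodicity of the \emph{lifted} measure, not of $\mu$, is what drives the proof. Fix an atom $v_y$ with $m^k_y(\{v_y\})=\beta>0$ and set
\[
L=\{(x,v)\in M\times\proj : m^k_x(\{v\})=\beta\}.
\]
This set is $F_{A_k}$-invariant (because $A_k(x)_*m^k_x=m^k_{f(x)}$ and $A_k(x)$ acts bijectively on $\proj$), and $m_k(L)=\int m^k_x(L\cap\{x\}\times\proj)\,d\mu\ge\beta>0$ since by $su$-invariance every fiber contains an atom of weight $\beta$. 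Ergodicity of $m_k$ forces $m_k(L)=1$, so for $\mu$-a.e.\ $x$ the conditional $m^k_x$ is supported entirely on its atoms of weight exactly $\beta$; being a probability measure, this gives $\beta=1/j$ and $m^k_x=\frac1j\sum_i\delta_{v^i_x}$. The a.e.\ statement upgrades to every $x$ via $su$-paths.

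Notice that this single ergodicity step simultaneously excludes any continuous component and forces all atoms to have the same weight. Your step (iv), invoking $\lambda^u(A_k,\mu)=0$ and Proposition~\ref{l.hyperbolic.matrix}, is a red herring: no Lyapunov-exponent information enters this lemma at all. More seriously, your step (v) as written does not close: the partition of atoms into weight-classes \emph{is} preserved by $F_{A_k}$ and is constant across fibers by $su$-invariance, so ergodicity of the base $\mu$ alone cannot rule out, say, two atoms of weight $1/3$ and two of weight $1/6$ in every fiber. Only ergodicity of $m_k$ itself distinguishes this configuration (the set of weight-$1/3$ atoms would then have $m_k$-measure $2/3\in(0,1)$) from the desired one.
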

\begin{proof}
Let $v_y\in \proj$ be such that $m^k_y(v_y)=\beta>0$ and for every $x\in M$, let $\gamma_x$ be an $su$-path joining $y$ and $x$. By the $su$-invariance of the disintegration $\{m^k_x\}_k$ it follows that $m^k_{x}(H^{A_k}_{\gamma_x} v_y)=\beta$ for every $x\in M$. Thus, considering $L=\lbrace (x,v_x)\in M\times \proj; \; m^k_x(v_x)=\beta\rbrace$ we get that $m_k(L)=\int m^k_x(L\cap \{x\}\times \proj)d\mu\geq\beta>0$. Consequently, since $L$ is $F_{A_k}$-invariant and $m_k$ is ergodic it follows that $m_k(L)=1$. In particular, $m^k_x(L\cap \{x\}\times \proj)=1$ for $\mu$-almost every $x\in M$ which implies that $m^k_x=\frac{1}{j}\sum_{i=1}^j\delta_{v^i_x} $, where $\frac{1}{j}=\beta$ (in particular, $j$ does not depend on $x$). Finally, to prove that this claim holds true \emph{for every} $x\in M$, we just take some $su$-path from a point in the total measure set and $x$ and use the $su$-invariance.
\end{proof}

The proof is going to be by contradiction. So, passing to a subsequence and using the previous lemma suppose $m^k_x$ has $j(k)$ atoms and that the sequence $\{j(k)\}_k$ is bounded. Restricting again to a subsequence, if necessary, we may assume that $j(k)$ is constant equal to some $j\in \mathbb{N}$. In particular, since $m_x=\frac{1}{2}\delta_{E^{s,A}_x}+\frac{1}{2}\delta_{E^{u,A}_x}$, for $k$ sufficiently large $m^k_x$ has an even number of atoms. Thus, writing $m^k_x=\frac{1}{j}\sum_{i=1}^j\delta_{v^i_k(x)}$ and reordering if necessary we may suppose that $v^i_k(x)\to E^{u,A}_x$ for $i\leq \frac{j}{2}$ and $v^\ell_k(x)\to E^{s,A}_x$ for $\ell>\frac{j}{2}$. Moreover, such convergence is uniform. Observe now that for each $k\in \mathbb{N}$ there exists some $x_k\in M$ such that $A_k(x_k)v^{i_k}_k(x_k)=v^{j_k}_k(f(x_k))$ for some $i_k\leq \frac{j}{2}$ and $j_k> \frac{j}{2}$, otherwise the set $L=\cup_{x\in M} \lbrace x\rbrace\times \lbrace v^1_k(x),\dots v^{\frac{j}{2}}_k(x)\rbrace$ would be $F_{A_k}$-invariant with measure
$$
m_k(L)=\int m^k_x(\lbrace v^1_k(x),\dots v^{\frac{j}{2}}_k(x)\rbrace)d\mu=\frac{1}{2},
$$
contradicting the ergodicity. Thus, restricting to a subsequence, if necessary, we may assume without loss of generality that $v^{i_k}_k(x_k)=v^{1}_k(x_k)$ and $v^{j_k}_k(x_k)=v^{j}_k(x_k)$ for every $k\in \mathbb{N}$ and that $x_k\to x$. In particular, 
$$
A(x)E^{u,A}_x=\lim_{k\to \infty} A_k(x_k)v^1_k(x_k)=\lim_{k\to\infty}v^j_k(f(x_k))=E^{s,A}_{f(x)} ,
$$
a contradiction. Summarizing, we can not have a subsequence $\{k_i\}_i$ so that the sequence $\{j(k_i)\}_{i}$ is bounded where $j(k)$ stands for the number of atoms of $m^k_x$ (which is independent of $x\in M$).

\subsection{Conclusion of the proof} \label{sec: concluison}
Given $x\in M$ let $\gamma$ be a non-trivial $su$-loop at $x$. In particular, from Lemma \ref{lem: su-inv of Osel} it follows that ${H^A_{\gamma}}E^{*,A}_x=E^{*,A}_x$ for $*\in \{s,u\}$. Consequently, either $H^A_{\gamma}$ is hyperbolic or $H^A_{\gamma}=\pm \id$. If $H^A_{\gamma}$ is hyperbolic then, since $H^{A_k}_{\gamma}\xrightarrow{k\to +\infty}H^A_{\gamma}$, it follows that $H^{A_k}_{\gamma}$ is also hyperbolic for every $k\gg0$. Thus, since ${H^{A_k}_{\gamma}}_*m^k_x=m^k_x$, it follows that $m^k_x$ is atomic and has at most two atoms for every $k\gg 0$ but from Section \ref{sec: atomic} we know this is not possible. So, we get that $H^{A}_{\gamma}=\pm\id$ \emph{for every} $su$-loop at $x$ and \emph{every} $x\in M$ and therefore $H^{\tilde{A}}_{\gamma}=[[\id]]$ \emph{for every} $su$-loop at $x$ and \emph{every} $x\in M$. Consequently, from Proposition \ref{l.hyperbolic.matrix2} we get that either there exists a non-trivial $su$-loop $\gamma$ at some point $x\in M$ and a sequence $\{k_j\}_j$ going to infinite as $j\to+\infty$ so that $H^{\tilde{A}_{k_j}}_{\gamma}$ is hyperbolic for every $j$ and thus $H^{A_{k_j}}_{\gamma}$ is also hyperbolic for every $j$ or $H^{\tilde{A}_k}_{\gamma}=[[\id]]$ for every $su$-loop $\gamma$ and every $k> k_\gamma$ for some $k_\gamma \in \mathbb{N}$. Arguing as we did above we conclude that the first case can not happen. So, all we have to analyse is the case when $H^{\tilde{A}_k}_{\gamma}=[[\id]]$ for every $su$-loop $\gamma$ and every $k> k_\gamma$ for some $k_\gamma \in \mathbb{N}$. 

If there exists $k_0\in \mathbb{N}$ so that $k_\gamma\leq k_0$ for every $su$-loop $\gamma$ then making the change of coordinates given in Section \ref{sec: tivial holonomies} for every $k>k_0$ (recall Section \ref{sec: PSL2r}) we get the that $L(\tilde{A}_k,\mu)$ is equal to the logarithm of the norm of the greatest eigenvalue of any representative of $\hat{\tilde{A}}_k(x)$, where $\hat{\tilde{A}}_k(x)$ is a constant element of $PSL(2,\mathbb{R})$, and $\hat{\tilde{A}}_k(x)\to \hat{\tilde{A}}(x)$. In particular, 
$$\lambda^u(A_k,\mu)=L(\tilde{A}_k,\mu)\xrightarrow{k\to+\infty}L(\tilde{A},\mu)=\lambda^u(A,\mu)$$
which is a contradiction. Now, recalling that in order to perform the change of coordinates in Section \ref{sec: tivial holonomies} it is enough to assume that $H^{\tilde{A}_k}_{\gamma}=[[\id]]$ for every $(K',L')$-loop $\gamma$ for some $K',L'>0$, to conclude the proof of Theorem \ref{teoA}, in view of the previous argumment, we only have to show that we can not have $k_\gamma$ arbitrarly large for $(K',L')$-loops. 

Let $k_\gamma$ be minimum for its defining property, that is, $H^{\tilde{A}_k}_{\gamma}=[[\id]]$ for every $k>k_\gamma$ and $H^{\tilde{A}_{k_\gamma}}_{\gamma}\neq[[\id]]$ and suppose that for each $j\in \mathbb{N}$ there exist $x_j\in M$ and a $(K',L')$-loop $\gamma_j$ at $x_j$ so that $k_{\gamma_j}\xrightarrow{j\to +\infty} +\infty$.  Passing to a subsequence we may assume $x_j\xrightarrow{j\to +\infty}x$ and $\gamma_j\xrightarrow{j\to +\infty}\gamma$ where $\gamma$ is an $su$-loop at $x$. This can be done because each $\gamma_j$ has at most $K'$ legs and each of them with length at most $L'$. In particular, if $\gamma_j$ is defined by the sequence $x_j=z^j_0,z^j_1, \ldots, z^j_{n_j}=x_j$ then $n_j\leq K'$ for every $j$. Thus, passing to a subsequence we may assume $n_j=n\leq K'$ for every $j\in \mathbb{N}$ and $z^j_i\xrightarrow{j\to +\infty} x_i$ for every $i=1,\ldots,n$ and consequently $\gamma$ is the $su$-loop defined by the sequence $x=x_0,x_1, \ldots, x_{n}=x$. Now, since $H^{\tilde{A}}_{\gamma}=[[\id]]$,  $H^{\tilde{A}_{k_{\gamma_j}}}_{\gamma_j}\xrightarrow{j\to +\infty} H^{\tilde{A}}_{\gamma}$ and $H^{\tilde{A}_{k_{\gamma_j}}}_{\gamma_j}\neq[[\id]]$ it follows from Proposition \ref{l.hyperbolic.matrix2} (recall Proposition \ref{p.convergence}) that $H^{\tilde{A}_{k_{\gamma_j}}}_{\gamma_j}$ is hyperbolic for every $j\gg 0$ and thus $H^{A_{k_{\gamma_j}}}_{\gamma_j}$ is also hyperbolic for every $j\gg 0$. Consequently, $m^{k_{\gamma_j}}_x$ is atomic and has at most two atoms for every $x\in M$ and every $j\in \mathbb{N}$ which again from Section \ref{sec: atomic} we know is not possible concluding the proof of Theorem \ref{teoA}.

\begin{remark}
We observe that Theorem \ref{teoA} can also be proved using the technics of couplings and energy developed in \cite{BBB}. Maybe those ideas can be useful in proving Conjecture \ref{conjecture}. We chose to present the previous proof because it is shorter and also different. It is also worth noticing that a similar result was obtained by Liang, Marin and Yang \cite[Theorem 6.1]{LMY} for the derivative cocycle under the additional assumption that $f$ has a pinching hyperbolic periodic point. In our context, such a hypothesis would immediately imply that all the conditional measures $m^k_x$ are atomic with at most two atoms for every $k\gg0$. In particular, Theorem \ref{teoA} would follow from the results of Section \ref{sec: atomic}.
\end{remark}

\section{Examples}

At this section we present two examples of fiber-bunched cocycles with nonvanishing Lyapunov exponents over a partially hyperbolic map which are accumulated by cocycles with zero Lyapunov exponents.

\subsection{Proof of Theorem \ref{teoB}}

Let $\omega$ be an irrational number of bounded type and $f_0:\mathcal{S}^1\to \mathcal{S}^1$ be given by $f_0(t)=t+2\pi \omega$ where $\mathcal{S}^1$ is the unit circle. Recently, Wang and You \cite[Theorem 1]{WaY15} constructed examples of cocycles
$A\in C^r(\mathcal{S}^1, \mbox{SL}(2,\mathbb{R}))$ over $f_0$, for any $r=0,1,\ldots ,\infty$ fixed, with arbitrarily large Lyapunov exponents which are approximated in the $C^r$-topology by cocycles with zero Lyapunov exponents. Let $A_0:\mathcal{S}^1\to \mbox{SL}(2,\mathbb{R})$ be such a cocycle and $\{A_k\}_k$ be a sequence in $C^r(\mathcal{S}^1, \mbox{SL}(2,\mathbb{R}))$ converging to $A$ so that $\lambda ^u(A_{k}, \nu)=0$ for every $k\in \mathbb{N}$ where $\nu$ denotes the Lebesgue measure on $\mathcal{S}^1$. Now, given $f_1:N\to N$, a volume-preserving Anosov diffeomorphism of a compact manifold $N$, let us consider the map $f:M:=\mathcal{S}^1\times N \to M$ given by $f(t,x)=(f_0(t),f_1(x))$ and let $\hat{A}:M\to \SL$ be given by $\hat{A}(t,x)=A_0(t)$. Thus, defining $\hat{A}_k(t,x)=A_k(t)$ and denoting by $\mu$ the Lebesgue measure on $M$ we have that $\lim _{k\to +\infty}\hat{A}_{k}=\hat{A}$, $\lambda ^u(\hat{A}_{k}, \mu)= \lambda ^u(A_{k}, \nu)=0$ for every $k \in \mathbb{N}$ and $\lambda ^u(\hat{A}, \mu)= \lambda ^u(A_0, \nu)>0$. Consequently, since $f$ is a volume-preserving partially hyperbolic and center-bunched diffeomorphism and $f_1$ may be chosen so that $(\hat{A},f)$ is fiber-bunched, we complete the proof of Theorem \ref{teoB}.

\subsection{Random product cocycles}
We now present another construction showing that given any real number $\lambda>0$, we have a fiber-bunched cocycle $A$ over a partially hyperbolic and center-bunched map $f$ so that $\lambda ^u(A, \mu)=\lambda$ which can be approximated by cocycles with zero Lyapunov exponents. We start with a general construction.

Let $\Sigma=\{1,\ldots ,k\}^{\mathbb{Z}}$ be the space of bilateral sequences with $k$ symbols and $\sigma:\Sigma\to \Sigma $ be the left shift map. Given maps $f_j:K\to K$ and $A_j:K\to \SL$ for $j=1,\ldots,k$ where $K$ is a compact manifold, let us consider 
$f:\Sigma\times K \to \SL$ and $A:\Sigma\times K \to \SL$ given, respectively, by
\begin{displaymath}
f(x,t)=(\sigma(x),f_{x_0}(t))
\end{displaymath}
and
\begin{displaymath}
A(x,t)=A_{x_0}(t).		
\end{displaymath}		

The \textit{random product of the cocycles} $\{(A_j,f_j)\}_{j=1}^k$ is then defined as the cocycle over $f$ which is generated by $A$. Observe that this definition generalizes the notion of random products of matrices explaining our terminology. Indeed, taking $K$ as being a single point we recover the aforementioned notion.

Differently from the case of random products of matrices where one have continuity of Lyapunov exponents (see \cite{BBB},\cite{BockerV}, \cite{LLE}), in the setting of random products of cocycles Lyapunov exponents can be very `wild'. This is what we exploit to construct our next example.

Let $f_0:\mathcal{S}^1 \to \mathcal{S}^1$ and $\nu $ be as in the previous example and let $A_0\in C^r(\mathcal{S}^1, \mbox{SL}(2,\mathbb{R}))$ be given by \cite[Theorem 1]{WaY15} so that $\lambda^u(A_0, \nu)>\lambda$. Taking $f_1:\mathcal{S}^1\to \mathcal{S}^1$ to be $f_1(t)=t$ and $A_1:\mathcal{S}^1\to \mbox{SL}(2,\mathbb{R})$ given by $A_1(t)=\id$, let $(A,f)$ be the random product of the cocycles $(A_0,f_0)$ and $(A_1,f_1)$ as defined above. Thus, letting $\eta$ be the Bernoulli measure on $\Sigma$ defined by the probability vector $(p_0,p_1)$ where $p_0$ is so that $p_0\lambda^u(A_0, \nu)=\lambda$ and considering $\mu=\eta \times \nu$, the cocycle generated by $A$ over $f$ has positive Lyapunov exponents and is accumulated by cocycles with zero Lyapunov exponents. Indeed, let $\{A_{0,k}\}_k$ be a sequence in $C^r(\mathcal{S}^1, \mbox{SL}(2,\mathbb{R}))$ converging to $A_0$ for which the cocycle $(A_{0,k},f_0)$ satisfies $\lambda ^u(A_{0,k}, \nu)=0$ for every $k\in \mathbb{N}$ whose existence is guaranteed by our choice of $A_0$ and \cite[Theorem 1]{WaY15}, $\{A_{1,k}\}_k$ be the sequence such that $A_{1,k}=\id$ for every $k\in \mathbb{N}$ and $(A_k, f)$ be the random product of $(A_{0,k},f_0)$ and $(A_{1,k},f_1)$. It is easily to see that $A_k \xrightarrow{k\to \infty}A$. Now, for $\mu$-almost every $(x,t)\in \Sigma\times \mathcal{S}^1$,
\begin{displaymath}
\lambda^u(A_k, \mu, x,t)=\lim_{n\to \infty}\frac{1}{n}\log\norm{A^n_k(x,t)}.
\end{displaymath}
Thus, observing that $A^n_k(x,t)=A^{\tau_n(x)}_{0,k}(t)$ where 
$$
\tau_n(x)=\#\left\{1\leq j \leq n;\; \sigma^j(x)_{0}=0\right\},
$$
it follows that
\begin{displaymath}
\lambda^u(A_k,\mu,x,t)=\lim_{n\to \infty}\frac{\tau_n(x)}{n}\frac{1}{\tau_n(x)}\log\norm{A^{\tau_n(x)}_{0,k}(t)}=p_0\lambda^u(A_{0,k},\nu).
\end{displaymath}
In partitular, $\lambda^u(A_k,\mu,x,t)$ is constant equal to $\lambda^u(A_k,\mu)$ for $\mu$-almost every $(x,t)\in \Sigma\times \mathcal{S}^1$. Analogously, $\lambda ^u(A,\mu)=p_0 \lambda ^u(A_{0}, \nu)$. Consequently,
$$\lambda^u(A_k,\mu)=0 \text{ for every } k\in \mathbb{N} \text{ and } \lambda _u(A,\mu)=\lambda>0$$
as claimed. Observe that despite the fact of not being smooth, the map $f$ is partially hyperbolic in the sense of the expansion and contraction properties when $\Sigma$ is endowed with the usual metric. Moreover, it is center-bunched and the cocycle $A$ is fiber-bunched.

\section*{Acknowledgements} We thank to Karina Marin for many helpful
comments and suggestions on this work and also for pointing out a gap in a previous version of the concluding argument. The first author was partially supported by a CAPES-Brazil postdoctoral fellowship under Grant No. 88881.120218/2016-01 at the University of Chicago. The second author was partially supported by Universit\'e Paris 13. The third author was partially supported by Universidad de Costa Rica and CNPq-Brazil.


\begin{thebibliography}{10}

\bibitem{ASV13}
A.~Avila, J.~Santamaria, and M.~Viana.
\newblock Holonomy invariance: rough regularity and applications to {L}yapunov
  exponents.
\newblock {\em Ast\'erisque}, 358:13--74, 2013.

\bibitem{AvV2}
A.~Avila and M.~Viana.
\newblock Simplicity of {L}yapunov spectra: proof of the {Z}orich-{K}ontsevich
  conjecture.
\newblock {\em Acta Math.}, 198:1--56, 2007.

\bibitem{BBB}
L.~Backes, A.~Brown, and C.~Butler.
\newblock Continuity of {L}yapunov exponents for cocycles with invariant
  holonomies.
\newblock Preprint http://arxiv.org/pdf/1507.08978v2.pdf.

\bibitem{BP15}
L.~Backes and M.~Poletti.
\newblock Continuity of {L}yapunov exponents is equivalent to continuity of
  {O}seledets subspaces.
\newblock {\em Stochastics and Dynamics}, 17:1750047, 2017.

\bibitem{Boc-un}
J.~Bochi.
\newblock Discontinuity of the {L}yapunov exponents for non-hyperbolic
  cocycles.
\newblock Preprint www.mat.puc-rio.br/$\sim$jairo/.

\bibitem{Boc02}
J.~Bochi.
\newblock Genericity of zero {L}yapunov exponents.
\newblock {\em Ergod. Th. {\&} Dynam. Sys.}, 22:1667--1696, 2002.

\bibitem{BockerV}
C.~Bocker and M.~Viana.
\newblock Continuity of {L}yapunov exponents for 2{D} random matrices.
\newblock {\em To appear in Ergodic Theory and Dynamical Systems}.

\bibitem{BoV04}
C.~Bonatti and M.~Viana.
\newblock Lyapunov exponents with multiplicity 1 for deterministic products of
  matrices.
\newblock {\em Ergod. Th. {\&} Dynam. Sys}, 24:1295--1330, 2004.

\bibitem{Ki68}
J.~Kingman.
\newblock The ergodic theory of subadditive stochastic processes.
\newblock {\em J. Royal Statist. Soc.}, 30:499--510, 1968.

\bibitem{LMY}
C.~Liang, K.~Marin, and J.~Yang.
\newblock Lyapunov exponents of partially hyperbolic volume-preserving maps
  with 2-dimensional center bundle.
\newblock Preprint https://arxiv.org/pdf/1604.05987.pdf.

\bibitem{Ose68}
V.~I. Oseledets.
\newblock A multiplicative ergodic theorem: {L}yapunov characteristic numbers
  for dynamical systems.
\newblock {\em Trans. Moscow Math. Soc.}, 19:197--231, 1968.

\bibitem{Rok62}
V.~A. Rokhlin.
\newblock On the fundamental ideas of measure theory.
\newblock {\em A. M. S. Transl.}, 10:1--54, 1962.
\newblock Transl. from Mat. Sbornik 25 (1949), 107--150. First published by the
  A. M. S. in 1952 as Translation Number 71.

\bibitem{LLE}
M.~Viana.
\newblock {\em Lectures on {L}yapunov {E}xponents}.
\newblock Cambridge University Press, 2014.

\bibitem{WaY15}
Y.~Wang and J.~You.
\newblock Quasi-periodic schrödinger cocycles with positive lyapunov exponent
  are not open in the smooth topology.
\newblock Preprint https://arxiv.org/abs/1501.05380, 2015.

\bibitem{Wliv}
A.~Wilkinson.
\newblock The cohomological equation for partially hyperbolic diffeomorphisms.
\newblock {\em Ast{\'e}risque}, 358:75--165, 2013.

\end{thebibliography}
\end{document}